\subjclass[2010]{46E35, 35J92, 35P30}
\keywords{Poincaré--Sobolev constant, $p$-Laplacian, inradius, distance function}
\date{\today}
\newtheorem{thm}{Theorem}[section]
\newtheorem{cor}[thm]{Corollary}
\newtheorem{lemma}[thm]{Lemma}
\theoremstyle{definition}
\newtheorem{defn}[thm]{Definition}
\newtheorem{rem}[thm]{Remark}
\newtheorem*{ack}{Acknowledgments}
\numberwithin{equation}{section}
\title[]{On the sharp Makai inequality}
\author[Prinari]{Francesca Prinari}
\address[F. Prinari]{Dipartimento di Scienze Agrarie, Alimentari e Agro-ambientali
	\newline\indent 
	Universit\`a di Pisa
	\newline\indent
	Via del Borghetto 80, 56124 Pisa, Italy}
\email{francesca.prinari@unipi.it}
\author[Zagati]{Anna Chiara Zagati}
\address[A.\,C.\ Zagati]{Dipartimento di Scienze Matematiche, Fisiche e Informatiche
	\newline\indent
	Universit\`a di Parma
	\newline\indent
	Parco Area delle Scienze 53/a, Campus, 43124 Parma, Italy}
\email{annachiara.zagati@unipr.it}
\dedicatory{Dedicated to Prof. Giuseppe Buttazzo}
\begin{document}
	
\begin{abstract}
On a convex bounded open set, we prove that Poincaré--Sobolev constants for functions vanishing at the boundary can be bounded from below in terms of the norm of the distance function in a suitable Lebesgue space.
This generalizes a result shown, in the planar case, by E. Makai, for the torsional rigidity. In addition, we compare the sharp Makai constants obtained in the class of convex sets with the optimal constants defined in other  classes of open sets.  
Finally, an alternative proof of the {\it Hersch--Protter inequality} for convex sets is given.
\end{abstract}

\maketitle

\begin{center}
	\begin{minipage}{11cm}
		\small
		\tableofcontents
	\end{minipage}
\end{center}

\vskip.1cm

	
\section{Introduction}
The aim of this paper is to provide  a sharp lower bound for the quantity 
\begin{equation}\label{prodotto} \lambda_{p,q}(\Omega)\,\|d_{\Omega}\|_{L^{\frac{p\,q}{p-q}}(\Omega)}^p, 
\end{equation}
where  $\Omega\subsetneq\mathbb{R}^N $ is an convex bounded open set,   $1\le q<p<\infty$ or   $1<q=p<\infty,$ $\lambda_{p,q}(\Omega)$ is the {\it generalized principal frequency} defined as 
\[ \lambda_{p,q}(\Omega) :=\inf_{\psi \in C^{\infty}_0(\Omega)} \left\{\int_{\Omega} |\nabla \psi|^p \, dx\, :\, \int_{\Omega} |\psi|^q \, dx=1\right\},\] 
and $d_{\Omega}$ is the  distance function  from the boundary $\partial\Omega$, namely 
\[d_\Omega(x):=\inf\big\{|x-y| : y\in \partial\Omega \big\}.\]

Here and in what follows,  $L^\frac{p\,q}{p-q}(\Omega)$ stands for $L^\infty(\Omega)$ when $p=q$ and we will  write $\lambda_{p}(\Omega)$ in place of $\lambda_{p,p}(\Omega)$.

\vskip.2cm
\noindent Our study is motivated  by an old result  due to  Makai (see \cite{Makai}) for the torsional rigidity \[ T(\Omega)=\frac{1}{\lambda_{2,1}(\Omega)},\]
which asserts that, for every convex bounded open set $\Omega \subsetneq \mathbb{R}^2$, the following sharp upper bound holds
\[T(\Omega) \le \displaystyle \int_{\Omega} d_{\Omega}^{\,2}\, dx.\]



\subsection{Optimal lower bound on convex sets: the main result}  
Inspired by the above  {\it Makai inequality}, in this paper we will prove the following theorem which extends, to every dimension $N$ and every $1\le q<p<\infty$, the optimal lower bound given by Makai in the case $q=1$ and $p=2$.

\begin{thm}[Makai's inequality]\label{thm:makai}
Let $1 \le  q < p < \infty$ and let $\Omega \subsetneq \mathbb{R}^N$ be a convex bounded open set. Then, the following lower bound holds
	\begin{equation}\label{eq:makai2dim}
		\lambda_{p,q}(\Omega) \ge \frac{C_{p,q}}{\left(\displaystyle \int_{\Omega} d_{\Omega}^{\frac{p\,q}{p-q}}\, dx \right)^{\frac{p-q}{q}}}  ,
	\end{equation}
where $C_{p,q}$ is the positive constant given by 
	\begin{equation}\label{eq:cpq}
		C_{p,q}=
		\left(\displaystyle \frac{\pi_{p,q}}{2} \right)^p \left(\displaystyle  \frac{p-q}{pq+p-q} \right)^{\frac{p-q}{q}},
\end{equation}
	with 
	\begin{equation}\label{eq:pipq}
	\pi_{p,q} := \inf_{u \in C_0^{\infty}((0,1))} \left\{ \|u'\|_{L^p([0,1])}\, :\, \|u\|_{L^q([0,1])}=1 \right\}. 
\end{equation}
	Moreover, the estimate \eqref{eq:makai2dim} is sharp.
\end{thm}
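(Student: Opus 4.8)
The plan is to reduce the $N$-dimensional bound to a one-dimensional weighted Poincaré inequality by slicing $\Omega$ along the lines of steepest ascent of $d_\Omega$, using the two consequences of convexity that $|\nabla d_\Omega|=1$ a.e.\ and that $d_\Omega$ is concave. Write $m:=\dfrac{pq}{p-q}$, so that $\tfrac1q=\tfrac1p+\tfrac1m$ and $\tfrac pm=\tfrac pq-1$, note the algebraic identity $\big(\tfrac{p-q}{pq+p-q}\big)^{\frac{p-q}{q}}=(m+1)^{-p/m}$ (so $C_{p,q}=(\pi_{p,q}/2)^p(m+1)^{-p/m}$ and \eqref{eq:makai2dim} reads $\lambda_{p,q}(\Omega)\big(\int_\Omega d_\Omega^m\,dx\big)^{p/m}\ge C_{p,q}$), and reduce to $u\in C^\infty_0(\Omega)$ with $u\ge0$. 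A bounded convex set can be foliated, up to an $\mathcal L^N$-null set, by the normal segments $S_y=\{y+t\,\nu_\Omega(y):0\le t\le\rho(y)\}$, $y\in\partial\Omega$, where $\nu_\Omega$ is the inner unit normal (defined $\mathcal H^{N-1}$-a.e.) and $\rho(y)$ is the distance from $y$ to the ridge set along $\nu_\Omega(y)$; on $S_y$ one has $d_\Omega(y+t\nu_\Omega(y))=t$, and the change of variables $x=y+t\,\nu_\Omega(y)$ has Jacobian $\mathcal J(y,t)=\prod_{i=1}^{N-1}(1-t\,\kappa_i(y))$ with $\kappa_i(y)\ge0$ the principal curvatures of $\partial\Omega$ ($\mathcal H^{N-1}$-a.e., by Alexandrov's theorem), so that $t\mapsto\mathcal J(y,t)$ is non-increasing on $[0,\rho(y)]$. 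Writing $g_y(t):=u(y+t\,\nu_\Omega(y))$ we have $g_y(0)=0$, $g_y'(t)=\nabla u\cdot\nabla d_\Omega$, and since $|\nabla u|\ge|\nabla u\cdot\nabla d_\Omega|$,
\[
\int_\Omega|\nabla u|^p\,dx\ \ge\ \int_{\partial\Omega}a(y)\,d\mathcal H^{N-1}(y),\qquad \int_\Omega|u|^q\,dx=\int_{\partial\Omega}b(y)\,d\mathcal H^{N-1}(y),\qquad \int_\Omega d_\Omega^m\,dx=\int_{\partial\Omega}c(y)\,d\mathcal H^{N-1}(y),
\]
where $a(y)=\int_0^{\rho(y)}|g_y'|^p\mathcal J(y,\cdot)\,dt$, $b(y)=\int_0^{\rho(y)}|g_y|^q\mathcal J(y,\cdot)\,dt$, $c(y)=\int_0^{\rho(y)}t^m\mathcal J(y,\cdot)\,dt$.

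The core is the one-dimensional inequality: for every $\ell>0$, every non-increasing $J\ge0$ on $[0,\ell]$, and every $g\in C^1([0,\ell])$ with $g(0)=0$,
\[
\Big(\int_0^\ell|g'|^pJ\,dt\Big)\Big(\int_0^\ell t^mJ\,dt\Big)^{p/m}\ \ge\ C_{p,q}\,\Big(\int_0^\ell|g|^qJ\,dt\Big)^{p/q}.
\]
For $J\equiv1$ this is the sharp Poincaré inequality on $(0,\ell)$ with a Dirichlet condition only at $0$: the even reflection of $g$ across $\ell$ is admissible for the $(p,q)$-problem on $(0,2\ell)$, whence $\int_0^\ell|g'|^p\ge(\pi_{p,q}/2)^p\,\ell^{\,1-p-p/q}\,(\int_0^\ell|g|^q)^{p/q}$ by the definition of $\pi_{p,q}$ and scaling, which after $\int_0^\ell t^m\,dt=\ell^{m+1}/(m+1)$ is exactly the claim with constant $C_{p,q}$. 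For general non-increasing $J$, write $J(t)=\nu([t,\ell])$ for a finite positive measure $\nu$, so $\int_0^\ell F(t)J(t)\,dt=\int_{[0,\ell]}\big(\int_0^s F\big)d\nu(s)$; apply the previous case on each $(0,s)$, rewrite it as $A(s)^{q/p}C(s)^{q/m}\ge C_{p,q}^{q/p}B(s)$ with $A(s)=\int_0^s|g'|^p$, $B(s)=\int_0^s|g|^q$, $C(s)=\int_0^s t^m$, integrate $d\nu(s)$, and apply Hölder in $s$ with the conjugate exponents $p/q$ and $m/q$ (note $q/p+q/m=1$) to conclude.

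The same device globalizes the estimate: the one-dimensional inequality gives $a(y)^{q/p}c(y)^{q/m}\ge C_{p,q}^{q/p}\,b(y)$ for $\mathcal H^{N-1}$-a.e.\ $y$, and integrating over $\partial\Omega$ and applying Hölder (exponents $p/q$, $m/q$) yields $C_{p,q}^{q/p}\int_\Omega|u|^q\le\big(\int_\Omega|\nabla u|^p\big)^{q/p}\big(\int_\Omega d_\Omega^m\big)^{q/m}$; rearranging and taking the infimum over admissible $u$ gives \eqref{eq:makai2dim}. For sharpness one tests $\lambda_{p,q}$ on the thin slabs $\Omega_L=(-1,1)\times(-L,L)^{N-1}$ with the product of the one-dimensional $(p,q)$-extremal on $(-1,1)$ and a cutoff on $(-L,L)^{N-1}$; a direct computation, using $1-p/q<0$ and the same scaling, shows $\lambda_{p,q}(\Omega_L)\,\|d_{\Omega_L}\|_{m}^{p}\to C_{p,q}$ as $L\to\infty$. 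The main obstacle is the rigorous justification of the normal-ray foliation and of the change-of-variables formula with a monotone Jacobian, since $\partial\Omega$ is merely Lipschitz: one needs the a.e.\ existence of the inner normal and of the second fundamental form, the a.e.\ injectivity of the normal map off the ridge set, and that these classical facts for smooth convex bodies persist in the limit; everything downstream uses only Hölder's inequality, an even reflection, and the definition of $\pi_{p,q}$.
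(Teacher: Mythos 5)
Your proposal takes a genuinely different route from the paper. The paper's Part 1 works with polytopes: it covers $\Omega=\mathrm{int}(K)$ by the convex sets $\Omega_i$ attached to the facets of $K$ (Lemma~\ref{partizione}), and on each $\Omega_i$ slices over the flat base $S_i$, so that Fubini plus the unweighted one-dimensional Poincar\'e inequality on $(0,f_i(y))$ suffices; general convex sets are then handled by polytope approximation. You instead work directly with the normal-ray foliation from $\partial\Omega$ and the Crasta--Malusa change of variables, which the paper itself invokes only in Section~\ref{sec:HP} and only for $q=p$. The weighted one-dimensional inequality you isolate --- for non-increasing $J\ge0$ and $g(0)=0$,
\[
\Big(\int_0^\ell |g'|^p J\,dt\Big)\Big(\int_0^\ell t^m J\,dt\Big)^{p/m}\ \ge\ C_{p,q}\Big(\int_0^\ell |g|^q J\,dt\Big)^{p/q}
\]
--- is precisely the $q<p$ counterpart of the paper's Theorem~\ref{thm:quotients}, and your layer-cake proof of it (write $J(t)=\nu([t,\ell])$, disintegrate all three integrals, apply the unweighted estimate on each $(0,s)$, close with H\"older in $s$ with exponents $p/q$ and $m/q$) is correct and elegant, whereas the paper proves its $p=q$ version by a Picone-identity argument that does not obviously extend. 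Your scaling computation confirming that the power of $\ell$ cancels and that $C_{p,q}=(\pi_{p,q}/2)^p(m+1)^{-p/m}$ appears is right, the globalization via H\"older over $\partial\Omega$ is right, and your sharpness argument with thin slabs is a minor variant of the paper's.

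The genuine gap, which you correctly flag, is the change-of-variables formula itself: \cite[Theorem~7.1]{CM} is stated for $C^2$ domains, and for a merely convex $\Omega$ the boundary is only Lipschitz, Alexandrov's theorem gives principal curvatures only $\mathcal H^{N-1}$-a.e., and one must still justify that the normal map is a parametrization of $\Omega$ off a null set with that Jacobian. Rather than chase these weak-regularity facts, the clean fix --- and the one the paper itself uses in Section~\ref{sec:HP}, Part~2 --- is to first prove the inequality for $C^2$ convex domains, where your argument applies verbatim, and then approximate a general bounded convex $\Omega$ by $C^2$ convex bodies $\Omega_k$ converging in the Hausdorff sense (e.g.\ \cite[Section~4.3]{Eggl}), passing to the limit via the scale invariance of $\lambda_{p,q}(\Omega)\big(\int_\Omega d_\Omega^m\big)^{p/m}$ and the convergence of distance functions. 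With that step written out your proof is complete; it trades the paper's elementary polytope covering for the Crasta--Malusa machinery, and in return gives a unified treatment of $q<p$ and $q=p$ in which the paper's Theorem~\ref{thm:quotients} becomes a special case of your layer-cake lemma.
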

The proof of Theorem \ref{thm:makai} is inspired by the covering argument for polygonal sets exploited by Makai in the planar case. In the $N$-dimensional case, thanks to a standard approximation argument, we can restrict ourselves to consider the case when $\Omega\subsetneq \mathbb{R}^N$ is the interior of a polytope $K$ (see Section \ref{sec:makai}). In this case, in order to prove \eqref{eq:makai2dim}, the key tool we use is given by Lemma \ref{partizione} where we  construct a suitable covering of $\Omega$ by means of convex sets $\Omega_i$, every one satisfying the property that $\partial \Omega_i\cap \Omega$ is the graph of a continuous function defined on a facet $S_i$ of the polytope $K$. The proof of the convexity of each set $\Omega_i$ relies on the concavity property of the distance function $d_{\Omega}$ (see \cite{AU}).

\vskip.2cm

If we denote by $r_{\Omega}$ the {\it inradius} of $\Omega$, which coincides with supremum of the distance function $d_{\Omega}$, 
as an application of Theorem \ref{thm:makai}, we can  give a different proof of  the following sharp estimate \eqref{eq:hpinequality},  first  proved  in \cite[Theorem 1.1]{BM} when  $1\le q<2$ and then  extended  to cover  the case $p \ne 2$ and $q=1$ in \cite[Theorem 4.3]{DPGG}. The general case   $1\le q<p<\infty$ was first  shown  in \cite[Theorem 5.7]{BPZ1} by means of a  comparison argument.   
\begin{cor}[Hersch-Protter--type inequality]
	\label{cor:HP}
	Let $1 \le q < p<\infty$ and $\Omega \subsetneq \mathbb{R}^N$ be a convex bounded open set. Then, the following lower bound holds
	\begin{equation}\label{eq:hpinequality}
		\lambda_{p,q}(\Omega)\,|\Omega|^{\frac{p-q}{q}} \ge \left( \frac{\pi_{p,q}}{2} \right)^p\,\frac{1}{r_\Omega^{\,p}}.
\end{equation}
	Moreover, the estimate \eqref{eq:hpinequality} is sharp.
\end{cor}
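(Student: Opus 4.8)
The plan is to obtain \eqref{eq:hpinequality} from Makai's inequality \eqref{eq:makai2dim} combined with a single sharp geometric fact about the distance function, namely
\[
\int_\Omega d_\Omega^{\,s}\,dx\ \le\ \frac{r_\Omega^{\,s}}{s+1}\,|\Omega|,\qquad s:=\frac{p\,q}{p-q},
\]
valid for every convex bounded open set $\Omega\subsetneq\mathbb{R}^N$. Granting this, the corollary is bookkeeping: since $pq+p-q=(s+1)(p-q)$, the constant in \eqref{eq:cpq} reads $C_{p,q}=\left(\frac{\pi_{p,q}}{2}\right)^{p}(s+1)^{-\frac{p-q}{q}}$, and since $s\,\frac{p-q}{q}=p$, raising the displayed estimate to the power $\frac{p-q}{q}$ and inserting it into \eqref{eq:makai2dim} yields
\[
\lambda_{p,q}(\Omega)\,|\Omega|^{\frac{p-q}{q}}\ \ge\ C_{p,q}\left(\frac{|\Omega|}{\displaystyle\int_\Omega d_\Omega^{\,s}\,dx}\right)^{\!\frac{p-q}{q}}\ \ge\ C_{p,q}\,\frac{(s+1)^{\frac{p-q}{q}}}{r_\Omega^{\,p}}\ =\ \left(\frac{\pi_{p,q}}{2}\right)^{p}\frac{1}{r_\Omega^{\,p}},
\]
which is exactly \eqref{eq:hpinequality}.

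So the substance is the displayed estimate, and this is the step I expect to require real work. I would prove it by a layer–cake argument. For $t\in(0,r_\Omega)$ put $\mu(t):=\mathcal{H}^{N-1}\big(\{x\in\Omega:\ d_\Omega(x)=t\}\big)$; since $d_\Omega$ is $1$-Lipschitz with $|\nabla d_\Omega|=1$ a.e.\ in $\Omega$, the co-area formula gives
\[
\int_\Omega d_\Omega^{\,s}\,dx=\int_0^{r_\Omega}t^{\,s}\,\mu(t)\,dt,\qquad\text{and}\qquad |\Omega|=\int_0^{r_\Omega}\mu(t)\,dt.
\]
The key assertion is that $t\mapsto\mu(t)$ is \emph{non-increasing} on $(0,r_\Omega)$: by the concavity of $d_\Omega$ on the convex set $\Omega$ (see \cite{AU}) the super-level set $\Omega_t:=\{d_\Omega>t\}$ is a convex body, $\{d_\Omega=t\}$ coincides with $\partial\Omega_t$, and the family $\{\Omega_t\}_{t}$ is decreasing; since the perimeter of convex bodies is monotone under inclusion, $\mu(t)=\mathcal{H}^{N-1}(\partial\Omega_t)$ is non-increasing. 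Now let $h(t):=t^{\,s}-\frac{r_\Omega^{\,s}}{s+1}$, which is non-decreasing and satisfies $\int_0^{r_\Omega}h(t)\,dt=0$; applying Chebyshev's integral inequality to the oppositely monotone pair $(\mu,h)$ gives $\int_0^{r_\Omega}\mu(t)\,h(t)\,dt\le0$, i.e.
\[
\int_\Omega d_\Omega^{\,s}\,dx=\int_0^{r_\Omega}t^{\,s}\,\mu(t)\,dt\ \le\ \frac{r_\Omega^{\,s}}{s+1}\int_0^{r_\Omega}\mu(t)\,dt\ =\ \frac{r_\Omega^{\,s}}{s+1}\,|\Omega|.
\]
(Equivalently, the same input shows that $t\mapsto|\Omega_t|$ is convex, hence lies below the chord joining $(0,|\Omega|)$ and $(r_\Omega,0)$, which one then integrates against $s\,t^{\,s-1}$.) I regard the monotonicity of $\mu$ as the only genuinely delicate point, and it is the place where convexity of $\Omega$ is really used, beyond its role inside Theorem~\ref{thm:makai}.

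Finally, for the sharpness of \eqref{eq:hpinequality} I would test on the thin slabs $\Omega_L:=(-L,L)^{N-1}\times(-r,r)$ (with $L>r$, so that $r_{\Omega_L}=r$ and $|\Omega_L|=(2L)^{N-1}2r$), using tensor test functions $\psi(x',x_N)=\eta(x')\,v(x_N)$ with $v\in C^\infty_0((-r,r))$ and $\eta\in C^\infty_0((-L,L)^{N-1})$ a cut-off equal to $1$ on $(-L+1,L-1)^{N-1}$. The gradient cross-terms are supported on a set of volume $O(L^{N-2})$, negligible against the main $O(L^{N-1})$ contribution, and the powers of $L$ cancel by homogeneity, so that — using the scaling $\pi_{p,q}((0,\ell))=\ell^{\frac1p-\frac1q-1}\,\pi_{p,q}$, whence $\lambda_{p,q}((-r,r))=(2r)^{1-\frac pq-p}\,\pi_{p,q}^{\,p}$ —
\[
\limsup_{L\to\infty}\ \lambda_{p,q}(\Omega_L)\,|\Omega_L|^{\frac{p-q}{q}}\ \le\ \lambda_{p,q}\big((-r,r)\big)\,(2r)^{\frac{p-q}{q}}\ =\ \left(\frac{\pi_{p,q}}{2}\right)^{p}\frac{1}{r^{\,p}}.
\]
Since $r_{\Omega_L}=r$, comparing with \eqref{eq:hpinequality} forces $\lambda_{p,q}(\Omega_L)\,|\Omega_L|^{\frac{p-q}{q}}\to\left(\frac{\pi_{p,q}}{2}\right)^{p}r^{-p}$ as $L\to\infty$, so the constant in \eqref{eq:hpinequality} cannot be improved.
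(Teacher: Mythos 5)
Your proof is correct, and it reaches the corollary by the same overall architecture as the paper (plug a sharp bound for $\int_\Omega d_\Omega^{\,s}\,dx$ in terms of $|\Omega|$ and $r_\Omega^{\,s}$ into Theorem~\ref{thm:makai}), but your proof of that distance estimate is genuinely different from the paper's. The paper stays inside the polytope machinery: with the covering $\Omega=\bigcup\Omega_i$ from Lemma~\ref{partizione}, it writes $\int_\Omega d_\Omega^{\,s}\,dx$ via Fubini over the graph regions $\{(y,t):0<t\le f_i(y)\}$ exactly as in~\eqref{eq:fdist}, then bounds $f_i(y)\le r_\Omega$ (Part (5) of the lemma) to get $\int_\Omega d_\Omega^{\,s}\,dx\le\frac{p-q}{pq+p-q}\,r_\Omega^{\,s}\,|\Omega|$; for a general convex set the same polytope approximation as in the main theorem is implicitly invoked. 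Your route is intrinsic and avoids the polytope decomposition entirely: the coarea formula, the fact that $\partial\{d_\Omega>t\}$ has non-increasing perimeter (a clean consequence of concavity of $d_\Omega$ plus perimeter monotonicity for nested convex bodies), and Chebyshev's integral inequality applied to the oppositely monotone pair $(\mu,h)$ with $h(t)=t^{s}-r_\Omega^{s}/(s+1)$. This is arguably more robust: it proves the geometric inequality directly for every convex bounded open set, is independent of the specific covering used to prove Makai's inequality, and in fact the monotonicity of $\mu$ is a reusable fact about convex bodies. The price is that you need to justify $|\nabla d_\Omega|=1$ a.e.\ and $\{d_\Omega=t\}=\partial\{d_\Omega>t\}$ for $0<t<r_\Omega$, which you indicate but which the paper sidesteps by working with explicit polytopes. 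For the sharpness part, the paper simply cites \cite[Theorem 5.7]{BPZ1}; your explicit tensor-test-function argument on slabs $(-L,L)^{N-1}\times(-r,r)$ is a valid self-contained substitute, with the $O(L^{N-2})$ boundary contribution correctly shown to be lower order, and the scaling $\lambda_{p,q}((-r,r))=(2r)^{1-p-p/q}\pi_{p,q}^{\,p}$ is used correctly so the powers of $L$ cancel.
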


 A  further result that follows from  \eqref{eq:hpinequality}, simply taking the limit as $q\nearrow p$\footnote{We use here that $q\mapsto \lambda_{p,q}(\Omega)$ is left-continuous at $q=p$ when $\Omega\subset\mathbb{R}^N$ is a bounded open set.}, is the  sharp inequality 
\begin{equation}\label{eq:hpinequality1}
\lambda_{p}(\Omega) \ge \left( \frac{\pi_p}{2} \right)^p \frac{1}{r_{\Omega}^{\,p}}, 
\end{equation}
valid  for every convex bounded open set $\Omega \subsetneq \mathbb{R}^N$. Here $\pi_{p}$ is defined as in \eqref{eq:pipq} with $p=q$.
\noindent  Formula \eqref{eq:hpinequality1} represents the extension to the case of the $p$-Laplacian of the {\it Hersch--Protter inequality}, originally proved by Hersch  in \cite[Th\'eor\`eme 8.1]{He} for $p=N=2$ and then generalized to every dimension $N \ge 2$ by Protter in \cite[Theorem 2]{Pr} (see also \cite{BGM}). The case $p\ne 2$ has been already obtained in \cite{Bra2, DDG, Ka1}. In Section \ref{sec:HP}, we will give a further alternative proof of \eqref{eq:hpinequality1} by exploiting a change of variables formula  \eqref{thm:crastamalusa}, proved in \cite[Theorem 7.1]{CM} when the domain of integration is a connected open set of class $C^2$, and then using a suitable approximation result for convex sets.

\vskip.2cm


\subsection{Lower bounds on other classes of open sets}\label{sec:constants} The final part of this paper is devoted to compare the optimal constant $C_{p,q}$ for convex sets, defined by \eqref{eq:cpq}, with the infimum of  \eqref{prodotto} in other classes of open sets of $\mathbb{R}^N$. First of all, for every $ 1\le q< p<\infty$ or $1<q=p<\infty$, we introduce  the constant   
	\begin{equation}\label{eq:makaiaperti}
		\widetilde{C}_{p,q}=\inf \left\{ \lambda_{p,q}(\Omega)\,\|d_{\Omega}\|_{L^{\frac{p\,q}{p-q}}(\Omega)}^p: \, \Omega\subsetneq \mathbb{R}^N \mbox{ is an open set},\, d_{\Omega}\in L^{\frac{p\,q}{p-q}}(\Omega)\right\}.
			\end{equation}
			The condition on $d_{\Omega}$ is motivated by the recent results in \cite{BPZ2} where,  by means of a comparison principle provided in \cite{BPZ1}  for the sub-homogeneous Lane--Emden equation, 
  it is shown  that, when $\Omega \subsetneq \mathbb{R}^N$ is an open set and  $1\le q<p<\infty$ or $1<p=q<\infty$, then the following implication holds
   \[\lambda_{p,q}(\Omega)>0 \qquad  \Longrightarrow \qquad d_{\Omega}\in L^{\frac{p\,q}{p-q}}(\Omega)\]
(see \cite[Theorems 5.1\, and 5.4]{BPZ2}). Moreover, in the same paper, following an argument used  in \cite{vBe} when $p=2$,    the above  implication is shown to be  an equivalence in the class of the  open sets $\Omega\subsetneq \mathbb{R}^N$ which satisfy the   Hardy inequality 
\begin{equation}\label{eq:hardy}   
\int_{\Omega} |\nabla u|^p \, dx \ge C \, \int_{\Omega} \frac{|u|^p}{d_{\Omega}^{\,p}} \, dx, \qquad \mbox{ for every } u \in C^{\infty}_0(\Omega), 
\end{equation}
with   $1<p<\infty$ and $C=C(p,\Omega)>0$.

\noindent Indeed, if   \begin{equation}\label{summdist} d_{\Omega}\in L^{\frac{p\,q}{p-q}}(\Omega) \qquad \hbox{ for }  1\le q<p<\infty \hbox{ or } 1< q=p<\infty, 
\end{equation}
then, the joint application of the H\"older inequality and of \eqref{eq:hardy}, gives 
\[\begin{split}
\int_{\Omega} |u|^q \, dx &\le \left( \int_{\Omega} \frac{|u|^p}{d_{\Omega}^{\,p} } \, dx \right)^\frac{q}{p}\, \|d_{\Omega}\|_{L^{\frac{p\,q}{p-q}}(\Omega)}^p  \\
&\le \frac{1}{C^{\,\frac{q}{p}}}   \left(\int_{\Omega} |\nabla u|^p \, dx \right)^{\frac{q}{p}}\,\|d_{\Omega}\|_{L^{\frac{p\,q}{p-q}}(\Omega)}^p, \qquad  \mbox{ for every } u\in C^{\infty}_0(\Omega), 
\end{split}
\]
 that implies   \begin{equation}\label{eq:hardy2}  
	\lambda_{p,q}(\Omega)\ge \dfrac{\mathfrak{h}_p(\Omega)}{\|d_{\Omega}\|_{L^{\frac{p\,q}{p-q}}(\Omega)}^p},
\end{equation}
 where  
 \[
 \mathfrak{h}_p(\Omega):=\inf_{u\in C^\infty_0(\Omega)} \left\{\int_\Omega |\nabla u|^p\,dx\, :\, \int_\Omega \frac{|u|^p}{d_\Omega^{\,p}}\,dx=1\right\}.
 \]
We  recall that, when $\Omega \subsetneq \mathbb{R}^N$ is a convex bounded open set,  it is well known that 
\[
	\mathfrak{h}_p(\Omega)= \left(\frac{p-1}{p}\right)^p,
	\] (for a proof, see \cite[Theorem 11]{MMP}). The  resulting estimate  \eqref{eq:hardy2}  in the class of the convex bounded open sets is far of being sharp, as the case $q=1$ and $p=N=2$ shows, being $C_{2,1}=1>\frac 1 4$.

	Now,  the constants $\widetilde{C}_{p,q}$ defined  in \eqref{eq:makaiaperti}  satisfy the following facts:
	\begin{itemize} 
	\item when $1<p\le N$, it holds that
\begin{equation}\label{makai0}
\widetilde{C}_{p,q}=0, \qquad \mbox{ for every } 1\le q< p \mbox{ or } q=p.
\end{equation}
Indeed, if $1<p\le N$, fixed a bounded open subset $\Omega \subsetneq \mathbb{R}^N$, we remove from it a periodic array of $n$ points and we call $\Omega_n$ the open set so constructed. 
	As $p\le N$, points in $\mathbb{R}^N$ have zero $p$-capacity (see \cite[Chapter 17]{T}), then, for every $1\le q<p\le N$ or $1<q=p\le N$,  it holds
	\[ \lambda_{p,q}(\Omega_n)=\lambda_{p,q}(\Omega), \quad \mbox{ for every } n \in \mathbb{N}. \]
Since $r_{\Omega_n}$ tends to $0$, as $n \to \infty$, the above equality implies that 

\[
\widetilde{C}_{p,q}\le \limsup_{n \to \infty} \lambda_{p,q}(\Omega_n)\,\|d_{\Omega_n}\|_{L^{\frac{p\,q}{p-q}}(\Omega_n)}^p \le \lambda_{p,q}(\Omega)\, |\Omega|^{\frac{p-q}{q}} \limsup_{n \to \infty} \, r^{\,p}_{\Omega_n} =0,
\]
for every $1\le q<p\le N$ or $1<q=p\le N$. In particular, in this range,   it follows that 
\begin{equation}\label{compare}
\widetilde{C}_{p,q} < C_{p,q};
\end{equation}

\item when  $p>N$, as shown independently by  Lewis in \cite{Lewis} and Wannebo in \cite{Wan},  every open subset $\Omega\subsetneq \mathbb{R}^N$ satisfies the Hardy inequality \eqref{eq:hardy} and it  holds  
\[\mathfrak{h}_{p}(\Omega)\ge \left(\frac{p-N}{p}\right)^p,\]
(for the latter, see \cite{Av} and \cite{GPP}).
	 By using the above lower bound in  \eqref{eq:hardy2}, we get that 
	\begin{equation}\label{casop>N}\widetilde{C}_{p,q} \ge \left(\frac{p-N}{p}\right)^p>0, \qquad \mbox{for every } N<p<\infty \mbox{ and }1\le q< p, \end{equation}
	and the natural question that arises  is whether the strict inequality \eqref{compare} also  holds  in the case $p>N$,  for    $1 \le q < p$ or $q= p$. 
We  will address  Section \ref{sec:exopen}  to discuss this question and, by means of a perturbative argument, we  will be able to show that, for every $N\ge 2$ and  for every fixed $1 \le q <N$, there exists $\overline{p}=\overline{p}(q)>N$ such that  \eqref{compare} holds  for every $p\in (q, \overline{p}]$.
\end{itemize}

\vskip.2cm

\noindent Another interesting class of sets where the quantity \eqref {prodotto} is bounded from below by a positive constant is   that one of planar simply connected open sets.
 Indeed, if $p=N=2$, thanks to a result due to Ancona (see \cite{Ancona}), every simply connected open set $\Omega \subsetneq \mathbb{R}^2$ verifies the  Hardy  inequality \eqref{eq:hardy}   with the optimal Hardy constant satisfying 
  \[\mathfrak{h}_{2}(\Omega) \ge \frac{1}{16}.\] 
	Hence, for $N=2$ and every  $1\le q\le 2$, defined 
 \begin{equation}\label{eq:makaiconn}
		\widehat{C}_{2,q}=\inf \left\{ \lambda_{2,q}(\Omega)\,\|d_{\Omega}\|_{L^{\frac{2\,q}{2-q}}(\Omega)}^2:  \, \Omega\subsetneq\mathbb{R}^2 \mbox{ is a simply connected open set}, d_{\Omega}\in L^{\frac{2\,q}{2-q}}(\Omega) \right\},
		\end{equation}
the joint application of \eqref{eq:hardy2} and \eqref{makai0} implies that
\[ \widehat{C}_{2,q}\ge \frac{1}{16}>0=\widetilde{C}_{2,q}, \qquad \hbox{ for every } 1\le q\le 2.\]
 By using a different argument, in \cite{makai65} Makai shows that
\[ \frac 1 4 \le \widehat{C}_{2,2}<\frac{\pi^2} 4=C_{2,2}, \]
and, in order to prove the upper bound, he exhibits a simply connected open set $\Omega\subsetneq\mathbb{R}^2$ satisfying 
\begin{equation}\label{eq:makaiconnecset}	
\lambda_{2}(\Omega)\,r^{\,2}_{\Omega}<\frac{\pi^2} 4.
\end{equation}
In Section \ref{sec:exsc}, 	
we use this fact   to show that there exists $1 \le \overline{q}<2$ such that it holds  
\[
\widehat{C}_{2,q}< C_{2,q}, \qquad \mbox{for every } q\in  [\overline{q},  2].
\]
In addition, by exploiting \eqref{eq:makaiconnecset},  we finally  prove  that,  in the case $N=2$, there exists $\overline{p}>2$ such that  
\[0<\widetilde{C}_{p,p}<\left(\frac{\pi_p}2\right)^p={C}_{p,p}, \qquad \mbox{ for every }p\in (2,\overline{p}].\]

\vskip.2cm




\subsection{Plan of the paper}
In Section \ref{sec:polytopes} we introduce some basic properties of polytopes in $\mathbb{R}^N$ and  we extend, to any dimension $N \ge 2$, the covering argument applied by Makai in \cite{Makai}. In Section \ref{sec:makai}, we prove the main results stated in Theorem \ref{thm:makai} and in  Corollary \ref{cor:HP}. 
In the subsequent Section \ref{sec:HP}, we give an alternative proof for the Hersch--Protter inequality \eqref{eq:hpinequality1}, by means of a change of variables formula. Finally, in Section \ref{sec:exconstants}, we compare the sharp constant for the Makai inequality on convex sets with the optimal constants for other class of sets; in particular, we consider the class of general open sets  whose distance function satisfies \eqref{summdist} and that one of planar simply connected sets.

\begin{ack} The authors are deeply indebted to Lorenzo Brasco for having pointed out the open questions concerning the Makai inequality  and for the  interesting discussions and remarks on the subject. 
F.\,P.  and A.\,C.\,Z.  are grateful to the Dipartimento di Matematica e Informatica at University of Ferrara and to the  Dipartimento di Matematica at University of Pisa  for their hospitality. F.P.  has  been financially supported by the PRA-2022 project ''Geometric evolution problems and PDEs on variable domains'', University of Pisa. F.\,P. and A.\,C.\,Z. are members of the {\it Gruppo Nazionale per l'Analisi Matematica, la Probabilit\`a
e le loro Applicazioni} (GNAMPA) of the Istituto Nazionale di Alta Matematica (INdAM). 
\end{ack}

\section{Preliminaries}\label{sec:polytopes}

\noindent For every convex set $C \subset \mathbb{R}^N$, we will denote by  $\mathrm{relint}(C)$ and $\mathrm{relbd}(C)$ the {\it relative
interior} and the {\it relative boundary} of $C$, once we regard it as a subset of its affine hull. We define the dimension of $C$, and we denote it by $\dim (C)$, as the dimension of its affine hull. Conventionally, the empty set has dimension $-1$. 
\begin{defn} 
Let $C \subset \mathbb{R}^N$ be a not empty convex closed set. A convex subset $S\subseteq C$ is a face of $C$ if each segment $[x,y]\subset C$ satisfying $S\cap \mathrm{relint}([x,y])\ne\emptyset$ is contained in $S$.

\noindent We denote  by $\mathcal{F}(C)$  the set of all faces of $C$ and by $\mathcal{F}_{i}(C)$  the set of all faces of $C$ having dimension $i$, for every $0\le i \le \dim(C)-1$. The empty set and $C$ itself are faces of $C$; the other faces are called {\it proper}. Every $(\dim(C)-1)$-dimensional face of $C$ is called a {\it facet} of $C$. 
\end{defn}

We can summarise the main properties of faces in the following theorem (see \cite[Section 2.1]{S}).
\begin{thm}\label{thm:faces} 
Let $C\subset \mathbb{R}^N$ be a not empty convex bounded set. Then 
\begin{enumerate}
\item the faces of $C$ are closed;
\item if $F\ne C$ is a face of $C$, then $F\cap \mathrm{relint}(C)=\emptyset$;
\item if $G$ and $F$ are faces of $C$, then $G\cap F$ is a face of $C$;
\item if $G$ is a face of $F$ and $F$ is a face of $C$, then $G$ is a face of $C$;
\item each point $x\in C$ is contained in $\mathrm{relint}(F)$ for a unique face $F\in \mathcal{F}(C)$.
\end{enumerate}
\end{thm}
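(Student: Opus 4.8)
All five assertions are elementary, and I would derive them directly from the definition of face using only two standard properties of relative interiors: first, that $\mathrm{relint}(K)\neq\emptyset$ for every nonempty convex set $K$; and second, the segment principle --- if $p\in\mathrm{relint}(K)$ and $q\in\overline{K}$, then $(1+\varepsilon)\,p-\varepsilon\,q\in\mathrm{relint}(K)$ for all sufficiently small $\varepsilon>0$, so that $p$ lies in the open segment joining $q$ to a relative interior point of $K$. Granting these, the whole argument reduces to repeated use of the defining condition in the form: \emph{if $S$ is a face of $C$ and a segment $[x,y]\subseteq C$ meets $S$ in a relative interior point, then $[x,y]\subseteq S$}.

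I would dispatch the purely formal statements (3) and (4) first. For (3), the set $G\cap F$ is convex, and if $[x,y]\subseteq C$ has a relative interior point in $G\cap F$, applying the defining condition to $G$ and then to $F$ gives $[x,y]\subseteq G$ and $[x,y]\subseteq F$, hence $[x,y]\subseteq G\cap F$. For (4), $G$ is convex and contained in $C$, and a segment $[x,y]\subseteq C$ with a relative interior point $z\in G$ first satisfies $[x,y]\subseteq F$ (since $z\in F$ and $F$ is a face of $C$) and then $[x,y]\subseteq G$ (since $[x,y]\subseteq F$ and $G$ is a face of $F$). The same mechanism settles (1) and (2). For (1): given a face $F$ and a point $x\in\overline{F}\subseteq C$, pick $p\in\mathrm{relint}(F)$ and, by the segment principle, a point $z\in F$ with $p$ in the open segment $(x,z)$; since $[x,z]\subseteq C$ and $p\in F$, the defining condition yields $x\in F$, so $F$ is closed. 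For (2): if $x\in F\cap\mathrm{relint}(C)$ and $y\in C$ is arbitrary, the segment principle with $K=C$ produces $z\in C$ with $x$ in the open segment $(y,z)$; since $x\in F$ and $F$ is a face, $y\in F$, and as $y$ was arbitrary we get $F=C$, contradicting $F\neq C$.

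Finally (5). Uniqueness is the ``outward'' argument once more: if $x\in\mathrm{relint}(F_1)\cap\mathrm{relint}(F_2)$ and $y\in F_1$, extending past $x$ inside $F_1$ (segment principle) produces a segment in $C$ whose relative interior contains $x\in F_2$, so $y\in F_2$; hence $F_1\subseteq F_2$, and symmetrically $F_1=F_2$. For existence I would take $F_x$ to be the intersection of all faces of $C$ containing $x$: this family is nonempty since $C$ itself qualifies, and it is closed under arbitrary intersection by the same argument as in (3), so $F_x$ is the smallest face of $C$ through $x$. It remains to show $x\in\mathrm{relint}(F_x)$; if not, $x$ is a relative boundary point of the convex set $F_x$ (closed, by (1)), so there is a hyperplane $H$ of $\mathrm{aff}(F_x)$ supporting $F_x$ at $x$, and the exposed face $F_x\cap H$ is a proper face of $F_x$ containing $x$, hence by (4) a face of $C$ through $x$ strictly smaller than $F_x$ --- a contradiction. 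The only steps that go beyond bookkeeping with the definition are the segment principle for relative interiors and this supporting-hyperplane construction (equivalently, the fact that a relative boundary point of a convex set lies in a proper face), and these are the points I would expect to state most carefully; everything else is a few lines each.
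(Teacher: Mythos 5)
Your proof is correct, but note that the paper itself offers no proof of this theorem: it simply cites Schneider's monograph (reference [S], Section~2.1), so there is no ``paper's proof'' to compare against. Your argument is essentially the standard one found in textbooks on convex geometry, built on the segment principle for relative interiors (if $p\in\mathrm{relint}(K)$, $q\in\overline{K}$, then $p$ lies in the open segment between $q$ and some point of $\mathrm{relint}(K)$) and, for the existence half of (5), the supporting-hyperplane theorem inside $\mathrm{aff}(F_x)$. All five parts check out: (3) and (4) are pure bookkeeping with the defining segment condition; (1) and (2) follow by extending a segment past the given point into the relative interior; uniqueness in (5) is the same ``outward extension'' idea, and existence goes through because the intersection of all faces through $x$ is itself a face (the argument of (3) works verbatim for arbitrary families), so if $x$ were a relative boundary point of this minimal face $F_x$, the exposed face $F_x\cap H$ cut out by a supporting hyperplane $H\subset\mathrm{aff}(F_x)$ would be a strictly smaller face of $F_x$, hence by (4) a strictly smaller face of $C$ through $x$, contradicting minimality. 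One small remark worth making explicit: the degenerate case $\dim(F_x)=0$ must be dispatched separately (there $F_x=\{x\}=\mathrm{relint}(F_x)$, so nothing to prove), since the supporting-hyperplane step needs $\dim(F_x)\geq 1$; and the hypothesis in the theorem as stated (``bounded'') should really be read as ``closed'' (as in the preceding definition) for part (1) to be meaningful and for $\overline{F}\subseteq C$ to hold.
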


\begin{defn}
We say that $K\subset\mathbb{R}^N $ is a polytope if it is the convex hull of finitely many points of $\mathbb{R}^N$.
\end{defn}

\noindent We recall that, thanks to \cite[Theorem 1.1.11 and Theorem 2.4.7]{S}, a polytope is a compact convex set. Moreover, each proper face of $K$ is itself a polytope and is contained in some facet of $K$.
 \vskip.2cm
 
\noindent Furthermore, we recall that if  $K$ is a polytope and $0\in {\rm int}(K)$, then the following facts hold:
\begin{enumerate}
\item[(i)] the {\it polar set} $K^{\circ}=\{ x\in \mathbb{R}^N:\, \langle x,y \rangle \le 1, \mbox{ for every }\, y\in K\}$ is itself a polytope; 
\item[(ii)]  if $F$ is a face of $K$ then the conjugate set $\widehat{F}=\{ x\in K^{\circ}:\,\langle x,y \rangle = 1, \mbox{ for every }\, y\in F\}$ is itself a polytope such that
\[\dim(\widehat{F})= N-\dim(F)-1;\]
\item[(iii)] if $F,G\in \mathcal{F} (K) $ are such that $F\subset G$, then $\widehat{F}\supset \widehat{G}$;
\item[(iv)]  the application $F\mapsto \widehat{F}$ is a  bijection from $\mathcal{F} (K)$ to $\mathcal{F} (K^\circ)$.
\end{enumerate}

\vskip.2cm

In the sequel we need the following result.

\begin{lemma}\label{lemma:2facets} 
Let $K$ be a polytope and assume that $0\in {\rm int}(K)$. Then, 
\begin{enumerate}
\item if $S,S'$ are facets of $K$ such that $S\not=S'$ then  $S \cap  \mathrm{relint}(S')  =\emptyset$;
\item for every facet $S$ of $K$, if $z\in \mathrm{relbd}(S)$, then there exists another facet $\widetilde{S}\ne S$ of $K$ such that $z\in \widetilde{S}$. In particular, $z \in \mathrm{relbd}(S) \cap \mathrm{relbd}(\widetilde{S})$.
\end{enumerate}
\end{lemma}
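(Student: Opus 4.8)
The plan is to treat the two claims separately, relying on the duality between faces of $K$ and faces of the polar polytope $K^\circ$, as recorded in properties (i)--(iv) above.

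\textbf{Claim (1).} The statement $S \cap \mathrm{relint}(S') = \emptyset$ for distinct facets $S, S'$ is essentially item (2) of Theorem~\ref{thm:faces} applied inside the polytope. Indeed, a facet $S'$ of $K$ is itself a polytope, hence a nonempty convex bounded set, and $S$ is a face of $K$ distinct from $S'$. First I would argue that $S \cap S'$ is a proper face of $S'$: by item (3) of Theorem~\ref{thm:faces}, $S \cap S'$ is a face of $K$, and since it is contained in $S'$ it is a face of $S'$ as well (one checks directly from the definition of face that a face of $K$ contained in a face $S'$ is a face of $S'$, or invokes item (4)); moreover $S \cap S' \ne S'$, for otherwise $S' \subseteq S$ would force $S' = S$ since both are facets, i.e. maximal proper faces. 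Then item (2) of Theorem~\ref{thm:faces}, applied with $C = S'$ and $F = S\cap S'$, yields $(S \cap S') \cap \mathrm{relint}(S') = \emptyset$, which is exactly $S \cap \mathrm{relint}(S') = \emptyset$.

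\textbf{Claim (2).} Fix a facet $S$ and a point $z \in \mathrm{relbd}(S)$. By item (5) of Theorem~\ref{thm:faces} applied to the polytope $S$, the point $z$ lies in $\mathrm{relint}(G)$ for a unique face $G$ of $S$, and since $z \in \mathrm{relbd}(S)$ we have $G \ne S$, so $G$ is a proper face of $S$ with $\dim(G) \le \dim(S) - 1 = N - 2$. By item (4) of Theorem~\ref{thm:faces}, $G$ is also a face of $K$. Now I pass to the polar: by property (ii), $\widehat{G}$ is a face of $K^\circ$ with $\dim(\widehat{G}) = N - \dim(G) - 1 \ge 1$, while the facet $S$ corresponds to the vertex $\widehat{S}$ with $\dim(\widehat{S}) = 0$; by property (iii), since $G \subset S$ we have $\widehat{S} \subset \widehat{G}$. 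Since $\widehat{G}$ has dimension at least $1$, it is not a single point, hence it has a vertex $v$ with $v \ne \widehat{S}$; as $v$ is a $0$-dimensional face of $\widehat{G}$, it is a $0$-dimensional face of $K^\circ$ (item (4)), so by property (iv) there is a facet $\widetilde{S}$ of $K$ with $\widehat{\widetilde{S}} = v$. Then $\widetilde{S} \ne S$ because $v \ne \widehat{S}$, and $v = \widehat{\widetilde{S}} \subset \widehat{G}$ gives, again by property (iii) (or the bijectivity in (iv)), $G \subset \widetilde{S}$; in particular $z \in G \subset \widetilde{S}$. Finally, $z \in \mathrm{relbd}(\widetilde{S})$ follows from Claim (1): $z \in S$ and $S \ne \widetilde{S}$ force $z \notin \mathrm{relint}(\widetilde{S})$, and symmetrically $z \notin \mathrm{relint}(S)$ is given.

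\textbf{Main obstacle.} The delicate point is the second claim, and specifically making sure that the chain of correspondences $G \mapsto \widehat{G} \mapsto$ (a vertex $v$ of $\widehat{G}$ other than $\widehat{S}$) $\mapsto \widetilde{S} = $ the facet with $\widehat{\widetilde{S}} = v$ is legitimate and really produces a \emph{facet} of $K$ containing $z$. One must use that a proper face $G$ of a facet $S$ has dimension at most $N-2$, so that $\widehat{G}$ has dimension at least $1$ and therefore genuinely contains more than one vertex of $K^\circ$; here a degenerate case to keep in mind is $\dim(G) = N-2$, which gives $\dim(\widehat G)=1$, a segment with exactly two vertices, one of which is $\widehat S$, forcing the other choice. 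The verification that a face of $K$ contained in a facet $S$ is itself a face of $S$ (used in Claim (1)) is also worth stating explicitly, though it is immediate from the definition or from item (4) of Theorem~\ref{thm:faces}.
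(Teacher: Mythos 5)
Your proof is correct and follows essentially the same route as the paper's: for Claim (1) both arguments reduce to showing $S\cap S'$ is a proper face of $S'$ and then invoking item (2) of Theorem~\ref{thm:faces}, and for Claim (2) both pass to the polar $K^\circ$, locate a vertex of $K^\circ$ distinct from $\widehat S$ lying in the conjugate of a face of $S$ containing $z$, and pull it back to a facet $\widetilde S\ne S$ through $z$ (the paper takes a facet $F$ of $S$ with $\dim F=N-2$ so that $\widehat F$ is a segment, whereas you take the unique face $G$ with $z\in\mathrm{relint}(G)$ and use $\dim\widehat G\ge 1$; these are interchangeable). One small slip: your parenthetical ``or invokes item (4)'' is not right, since item (4) gives that a face of a face is a face of the ambient body, which is the opposite implication to the one you need (a face of $K$ contained in $S'$ is a face of $S'$); your primary justification, the direct check from the definition, is the correct one.
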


\vskip.1cm


\begin{proof}
\begin{enumerate} 
\item Without loss of generality, we assume that $F=S\cap S'\ne\emptyset$.  Since $S\not=S'$, we have that $F$ is a proper face of $S'$. Then,  by Part (2) of Theorem \ref{thm:faces}, we have 
\[  S\cap \mathrm{relint}(S') \subset S \cap S' \cap \mathrm{relint}(S')= F \cap \mathrm{relint}(S') = \emptyset; \]
\vskip.1cm
\item let $S$ be a facet of $K$ and let $z\in  \mathrm{relbd}(S)$.  
Then, there exists a facet $F$ of $S$  such that $z\in F$. Then, $F \in \mathcal{F}(K)$ and $\dim(F)=N-2$. This implies $\dim(\widehat{F})= 1$.  
Therefore, there exist exactly two points $x',x''$, such that $x'\ne x''$ and \[\widehat{F}=[x',x''].\] 
Then, thanks to property (iv) above, there exist two facets $S',S''\in \mathcal{F}(K)$, with $S'\ne S''$, such that $\widehat{S'}=\{x' \}$ and $\widehat{S''}=\{x''\}$, and, by property (ii) above,  
\[ z\in F\subseteq \widehat{ \{x' \}} \cap \widehat{\{x'' \}}= S'\cap S'',\] namely, there exists at least another facet $\widetilde{S}\not=S$   containing $z$. 
 Since $z\in \mathrm{relbd}(S)\cap \widetilde{S}\subseteq S\cap \widetilde{S}$ and, thanks to Part (1) of this lemma, $S\cap \mathrm{relint}(\widetilde{S})=\emptyset,$  we can conclude that 
 $z \in \mathrm{relbd}(S) \cap \mathrm{relbd}(\widetilde{S})$.

\end{enumerate}
\end{proof}
In the next lemma,  we extend to every dimension  the argument applied by Makai in  \cite{Makai}  in order to divide  the interior  of a polytope $K$, when  $\mathrm{int}(K)\not=\emptyset$,  in a finite number of suitable convex open subsets. 
\begin{lemma}\label{partizione}
\noindent Let $N\ge 2$ and $K \subset \mathbb{R}^N$ be a polytope such that $0\in \mathrm{int}(K)$. Let $\Omega =\mathrm{int}{(K)}$ and let $S_1,S_2,\dots, S_h$ be the  facets of $K$. 
For  every $i \in \{1,\dots, h\}$, let $\Pi_{i}: \mathbb{R}^N\to H_i$ be the orthogonal projection on the affine hyperplane $H_i$ containing $S_i$.  Define   
\begin{equation}\label{defOm_i}
\Omega_i = \Big\{ x \in \Omega : d_{S_i}(x) = d_{\Omega}(x) \Big\},
\end{equation}
where \[d_{S_i}(x)=\min_{y\in S_i}|x-y|.\]
Then, for every $i \in \{1,\dots, h\}$, the following facts hold: 
\vskip.1cm
\begin{enumerate}
\item if $x\in \Omega_i$ then $\Pi_i(x)\in S_i$. In particular $\Pi_{i} (x)$ is the unique minimizer of the problem defining $d_{S_i}(x)$;
\vskip.1cm
	\item $\Pi_{i}(x)\in \mathrm{relint}(S_i)$, for every $x\in \Omega_i$; 
	\vskip.1cm
	\item $\Omega_i$ is a convex set; 
	\vskip.1cm
	\item 
	${\rm int}(\Omega_i)\ne\emptyset$;
	\vskip.1cm
	\item $\Omega_i$ can be included in a rectangle with base $S_i$ and height $r_{\Omega}$;
	\vskip.1cm
	\item for every $x_0 \in {\rm relint } (S_i)$, there exists a unique  $y_0\in \partial \Omega_i \cap \Omega$ such that $\Pi_i(y_0)=x_0$.
\end{enumerate} 
In particular, the restriction $\Pi_i: \partial \Omega_i\cap \Omega\to \mathrm{relint}(S_i)$ is a continuous bijection.
\end{lemma}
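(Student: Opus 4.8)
The plan is to establish the six listed properties in the stated logical order, since each one feeds into the next. For Part (1), I would start from the definition of $\Omega_i$: if $x \in \Omega_i$ then $d_{S_i}(x) = d_\Omega(x)$, and $d_\Omega(x) = d_{S_j}(x)$ for some facet $S_j$ realizing the minimum distance to $\partial\Omega$. The key observation is that, for a convex set $K$, the nearest point of $\partial K = \bigcup_j S_j$ to an interior point $x$ lies in some facet $S_j$, and the nearest point of the affine hyperplane $H_i$ to $x$ is exactly $\Pi_i(x)$; so $d_{S_i}(x) \ge \mathrm{dist}(x, H_i) = |x - \Pi_i(x)|$ with equality iff $\Pi_i(x) \in S_i$. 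Since $d_{S_i}(x) = d_\Omega(x) \le \mathrm{dist}(x, H_i)$ (because the supporting hyperplane $H_i$ separates $x$ from the exterior, so $d_\Omega(x)$ cannot exceed the distance to $H_i$), we force equality and conclude $\Pi_i(x) \in S_i$, and it is then the unique minimizer because orthogonal projection onto a convex set is single-valued. For Part (2), I would argue by contradiction: if $\Pi_i(x) \in \mathrm{relbd}(S_i)$, then by Part (2) of Lemma \ref{lemma:2facets} there is another facet $\widetilde S \ne S_i$ with $\Pi_i(x) \in \widetilde S$, and a short geometric computation (using that $x$ lies strictly inside $\Omega$ and that $\widetilde S$ is not contained in $H_i$) shows $d_{\widetilde S}(x) < d_{S_i}(x) = d_\Omega(x)$, contradicting the definition of $d_\Omega$.

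For Part (3), the convexity of $\Omega_i$, I expect this to be the main obstacle, and here I would invoke the concavity of $d_\Omega$ on the convex set $\Omega$ (citing \cite{AU}), together with the fact that on $\Omega_i$ the function $d_\Omega$ coincides with the affine function $x \mapsto \langle x - p, \nu_i \rangle$, where $\nu_i$ is the inner unit normal to $S_i$ and $p \in S_i$; indeed $d_{S_i}(x) = \langle x - p, \nu_i\rangle$ whenever $\Pi_i(x) \in S_i$, which holds on $\Omega_i$ by Part (1). One characterization: $\Omega_i$ is precisely the set where the concave function $d_\Omega$ touches its affine majorant $\ell_i(x) := \langle x - p, \nu_i\rangle$ from below (we have $d_\Omega \le \ell_i$ on all of $\Omega$ since $H_i$ supports $\Omega$, with equality exactly on $\Omega_i$), and the contact set $\{d_\Omega = \ell_i\}$ of a concave function with a supporting affine function is convex. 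I would write this out carefully: for $x, y \in \Omega_i$ and $t \in [0,1]$, concavity gives $d_\Omega(tx + (1-t)y) \ge t\, d_\Omega(x) + (1-t) d_\Omega(y) = t\, \ell_i(x) + (1-t)\ell_i(y) = \ell_i(tx+(1-t)y) \ge d_\Omega(tx+(1-t)y)$, so equality holds throughout and $tx+(1-t)y \in \Omega_i$.

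For Parts (4), (5), (6) I would proceed as follows. For Part (4): since $S_1, \dots, S_h$ cover $\partial K$ and the $\Omega_i$ cover $\Omega$, and each $\Omega_i$ is convex, if some $\Omega_i$ had empty interior it would lie in a hyperplane, which is negligible; but a finite union of such negligible sets cannot cover the open set $\Omega$, so each $\Omega_i$ must be full-dimensional. For Part (5): if $x \in \Omega_i$ then $\Pi_i(x) \in S_i$ by Part (1), and $|x - \Pi_i(x)| = d_\Omega(x) \le r_\Omega$, so $x$ lies in the closed slab of height $r_\Omega$ over $S_i$, i.e.\ in the stated rectangle (product of $S_i$ with the segment $[0, r_\Omega]$ in the normal direction). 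For Part (6): fix $x_0 \in \mathrm{relint}(S_i)$ and consider the ray $x_0 + t\nu_i$, $t \ge 0$; for small $t > 0$ the point is in $\Omega_i$ (its projection is $x_0 \in \mathrm{relint}(S_i)$, and near $S_i$ the nearest facet is $S_i$), while for large $t$ it leaves $\Omega$; by convexity of $\Omega_i$ and Parts (1)–(5), the set of $t$ with $x_0 + t\nu_i \in \Omega_i$ is an interval $[0, t_0]$ with $x_0 + t_0 \nu_i \in \partial\Omega_i \cap \Omega$, and this is the unique such boundary point over $x_0$ because $\Omega_i$ is convex and its fiber over $x_0$ under $\Pi_i$ is a segment. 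Finally, the concluding sentence follows by packaging Part (6): $\Pi_i$ restricted to $\partial\Omega_i \cap \Omega$ is surjective onto $\mathrm{relint}(S_i)$ and injective by the uniqueness in Part (6); continuity of $\Pi_i$ is automatic (it is a linear map), and continuity of the inverse follows from compactness/convexity arguments on the fibers, or more directly because the inverse is the graph map $x_0 \mapsto x_0 + t_0(x_0)\nu_i$ with $t_0$ continuous by convexity of $\Omega_i$.
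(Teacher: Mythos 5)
Most of your proposal tracks the paper's proof closely: Part (1) via the supporting-hyperplane inequality $d_\Omega(x)\le d_{H_i}(x)$ (the paper instead argues by contradiction along the segment $[x,\Pi_i(x)]$, but these are the same observation); Part (3) via the concavity of $d_\Omega$ — your ``contact set'' reformulation, $\Omega_i=\{d_\Omega=\ell_i\}$ with $d_\Omega\le\ell_i$ concave vs.\ affine, is a clean repackaging of the paper's inequality chain \eqref{eq:dconcava}--\eqref{eq:stimaproiez}; Part (5) directly; Part (6) via the perpendicular ray through $x_0\in\mathrm{relint}(S_i)$. Your Part (2) is compressed but the route is sound: given $\Pi_i(x)\in\widetilde S\ne S_i$ with $\widetilde S\not\subset H_i$ and $x-\Pi_i(x)$ parallel to the inward normal $n_i$, a one-line perturbation of $\Pi_i(x)$ inside $\widetilde S$ produces a strictly closer point of $\widetilde S$, giving $d_{\widetilde S}(x)<d_\Omega(x)$. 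The paper instead observes that $x$ must also lie in $\Omega_j$, applies the uniqueness from Part (1) to $S_j$ to get $\Pi_j(x)=\Pi_i(x)$, and deduces $H_i=H_j$; both routes work.

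The genuine gap is in Part (4). Your argument — each $\Omega_i$ is convex, a convex set with empty interior lies in a hyperplane and is hence Lebesgue-null, and a finite union of null sets cannot cover the open set $\Omega$ — proves only that \emph{at least one} $\Omega_i$ has nonempty interior, not that \emph{every} $\Omega_i$ does. Nothing in that pigeonhole rules out, say, $\Omega_1$ being lower-dimensional while $\Omega_2,\dots,\Omega_h$ still cover $\Omega$. The paper's proof is local and facet-by-facet: fix $x_0\in\mathrm{relint}(S_i)$; by Lemma~\ref{lemma:2facets}(1), $x_0\notin S_j$ for every $j\ne i$, so $g_j:=d_{S_j}-d_{S_i}$ satisfies $g_j(x_0)=d_{S_j}(x_0)>0$; by continuity $g_j>0$ on a ball $B_\rho(x_0)$ for all $j\ne i$, hence $B_\rho(x_0)\cap\Omega$ is a nonempty open subset of $\Omega_i$. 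Notice you already write down the needed observation inside your Part (6) (for small $t>0$ the point $x_0+t\nu_i$ lies in $\Omega_i$ ``because near $S_i$ the nearest facet is $S_i$''); promoting that remark to a full argument and placing it at Part (4) closes the gap, and Part (4) cannot be deferred to Part (6) since (6) uses (4).
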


\begin{figure}
	\begin{tikzpicture}[thick, scale=0.8, every node/.style={scale=0.8}]	\draw[black] (-7,-1) -- (7,-1);
		\draw[black] (-7,1) -- (7,1);
		\draw[black] (-7,-1) -- (-7,1);
		\draw[black] (7,-1) -- (7,1);
		
		\draw[black, dashed] (-7,1) -- (-6,0);
		\draw[black, dashed] (-7,-1) -- (-6,0);
		\draw[black, dashed] (7,1) -- (6,0);
		\draw[black, dashed] (7,-1) -- (6,0);
		\draw[black, dashed] (-6,0) -- (6,0);
		
	\end{tikzpicture}
	\caption{A polytope $\Omega \subset \mathbb{R}^2$ divided into subsets $\Omega_i$, with $i \in \{1, \dots, 4\}$.} 		
\end{figure}
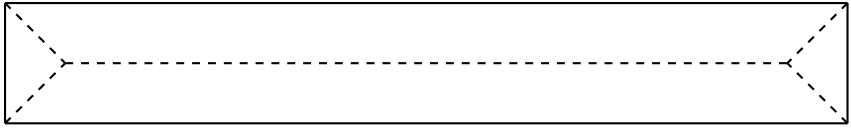



\begin{proof}
 First of all, we notice that, since $\mathrm{int}(K)\ne\emptyset$, we have that $\overline{ \Omega}=K$. Now, we prove every part separately. Let us fix $i \in \{1,\dots, h\}$.
\begin{enumerate}
\item Let $x \in \Omega_i$. By contradiction, if $\Pi_i(x)\notin K$, then the segment $[x,\Pi_i(x)]$ would intersect $\partial \Omega$ in a point $z$. Since $z\ne\Pi_i(x)$, then $z \in S_j$, with $S_j \ne S_i$. Hence, we would have that
\[ d_{\Omega}(x)\le d_{S_j}(x)\le |x-z|<|x-\Pi_i(x)|\le d_{S_i}(x)=d_{\Omega}(x), \]
which is a contradiction. 
In particular, using the fact that  $\Pi_i(x)\in S_i$, we get that 
\[d_{S_i}(x)=\min_{y\in S_i}|x-y|\le |x-\Pi_{i} (x)|=d_{H_i}(x)\le d_{S_i}(x)\]
that is, $\Pi_{i} (x)$ is the unique minimizer of the problem defining $d_{S_i}(x)$;

\vskip.2cm

\item by contradiction, let us suppose that $\Pi_i(x)\in S_i\setminus \mathrm{relint}(S_i)=\mathrm{relbd}(S_i)$. By Lemma \ref{lemma:2facets}, Part (2), there exists $S_j\ne S_i$ such that $\Pi_i(x) \in S_j$. Since 
\[ d_{S_j}(x)\le |x- \Pi_i(x)|=d_{S_i}(x)=d_{\Omega}(x)\le d_{S_j}(x),\] 
we get  that \[ d_{S_j}(x)=|x- \Pi_i(x)|.\]
Being $\Pi_{i} (x)\in S_j$, by  uniqueness,  we  would obtain $\Pi_j(x)=\Pi_i(x)\in H_i\cap H_j$. Then  $H_i \equiv H_j$, giving the contradiction $S_i=S_j$;

\vskip.2cm

\item 
by contradiction, assume that there exist  $x,y\in \Omega_i$ and $\lambda \in (0,1)$ such that $z=\lambda\,x + (1-\lambda)\,y \notin \Omega_i$. Hence $z\in \Omega_j$ for some  $j \ne i$, and 
\begin{equation}\label{eq:contraddiction} 
d_{\Omega}(z)=d_{S_j}(z)<d_{S_i}(z). 
\end{equation}
Since $\Omega$ is a convex set, then the distance function $d_{\Omega}$ is a concave function (see \cite{AU}), hence, it follows that
\begin{equation}\label{eq:dconcava}
	d_{\Omega}(z)=d_{\Omega}(\lambda\,x + (1-\lambda)\,y) \ge \lambda \,d_{\Omega}(x) + (1-\lambda)\,d_{\Omega}(y).
\end{equation}
On the other hand, by Part (1), we have that $\Pi_i(x), \Pi_i(y)\in S_i$. By linearity, we get that 
\[ \Pi_i(z)=\lambda\,\Pi_i(x) + (1-\lambda)\,\Pi_i(y) \in S_i,\] 
which implies that
\begin{equation}\label{eq:stimaproiez}
	d_{S_i}(z)=|z-\Pi_i(z)|\le\lambda|x-\Pi_i(x)| + (1-\lambda)|y-\Pi_i(y)|=\lambda\, d_{\Omega}(x) + (1-\lambda)\,d_{\Omega}(y).
\end{equation}
By combining  \eqref{eq:dconcava},  \eqref{eq:stimaproiez} and \eqref{eq:contraddiction}, we obtain a contradiction;

\vskip.2cm

\item let $x_0\in \mathrm{relint}(S_i)$, then, by Part(1) of Lemma \ref{lemma:2facets}, we have that $x_0\notin S_j$ for every $S_j \ne S_i$.
We set \mbox{$g_j(\,\cdot\,)=d_{S_j}(\,\cdot\,)-d_{S_i}(\,\cdot\,)$}, then,  it holds that 
\[g_j(x_0)=d_{S_j}(x_0)>0. 
\]
Being $g_j$ a continuous function on $\mathbb{R}^N$, there exists $B_{\rho}(x_0)$ such that $g_j>0$ on $B_{\rho}(x_0)$, for every $j \ne i$.
Hence $d_{\Omega}=d_{S_i}$ on the open set $B_{\rho}(x_0)\cap \Omega \ne \emptyset$, which implies that 
$B_{\rho}(x_0)\cap \Omega \subset \Omega_i$, giving the desired conclusion;

\vskip.2cm

\item without loss of generality, suppose that 
\[ H_i=\{ (y,x_N)\in \mathbb{R}^N:\, x_N=0 \}.\] 
Hence, thanks to Part (3), one of the following inclusion holds
\[\Omega_i\subset  \{ (y,x_N)\in \mathbb{R}^N:\, x_N\ge0 \} \qquad \mbox{ or } \qquad \Omega_i\subset  \{ (y,x_N) \in \mathbb{R}^N:\, x_N\le0 \}.\]
In both cases, since \[|x_N|=d_{S_i}(x)=d_{\Omega}(x) \le r_{\Omega}, \qquad \mbox{ for every } x \in \Omega_i,\] we obtain the claimed conclusion;

\vskip.2cm

\item let $x_0\in \mathrm{relint}(S_i)$. Since $\mathrm{ int}(\Omega_i)\ne\emptyset$, we can take an  open half-line $r_{x_0}$ with origin $x_0$ such that it is perpendicular to $S_i$ in $x_0$ and $r_{x_0}\cap \mathrm{int}(\Omega_i)\ne\emptyset$. Being $\Omega_i$  bounded set and $r_{x_0}$ a connected set, we obtain that $r_{x_0} \cap \partial \Omega_i\ne\emptyset$. Moreover, as $\partial \Omega_i=(\partial \Omega_i\cap \Omega)\cup S_i$ and $r_{x_0} \cap S_i=\emptyset$, we also have that 
\[\Sigma_i(x_0):=r_{x_0}\cap \partial \Omega_i\cap \Omega \ne\emptyset. \]
Now, we will show that $\Sigma_i(x_0)$ consists in exactly a point.
Indeed, we note that if  $y'\in \Sigma_i(x_0)$, then
\[ ]x_0,y'[= ]x_0,x]\cup [x,y'[\subseteq  \mathrm{int}({\Omega}_i), \qquad \mbox{ for every } x\in r_{x_0} \cap \mathrm{int}({\Omega}_i).\] 
In particular, if $y',y''\in  \Sigma_i(x_0)$, with $y'\ne y''$, then we would obtain the contradiction   
\[ y''\in\, ]x_0,y'[ \subset \mathrm{int}({\Omega}_i) \qquad \mbox{ or } \qquad y'\in\, ]x_0,y''[\subset \mathrm{int}({\Omega}_i).\] 
\end{enumerate}
\end{proof}


\vskip.1cm


\section{Proof of the main result}\label{sec:makai}

\noindent In proving Theorem \ref{thm:makai}, first we will restrict ourselves to consider the case when the convex open set $\Omega$ coincides with the interior of a polytope $K$. Then, the general result will follow thanks to an approximation argument by means of polytopes, valid for convex sets.  

\vskip.3cm

\noindent{\it Proof of Theorem \ref{thm:makai}.}  By following \cite{Makai}, we divide the proof in three parts: first, we prove the lower bound \eqref{eq:makai2dim} when $\Omega$ is the interior of a polytope $K$, then, by applying an approximation argument, we show that such a lower bound holds when $\Omega\subsetneq \mathbb{R}^N$ is a general convex set. Finally, we show that \eqref{eq:makai2dim} is asymptotically sharp for slab-type sequences.

\vskip.2cm	
	
\noindent {\it Part 1: Makai's inequality for a polytope.} 
Without loss of generality, let us suppose that $0\in \Omega $. Moreover, in this step we assume that $\Omega =\mathrm{int}(K)$, where $K \subset \mathbb{R}^N$ is a polytope. 
According to the notation in Lemma \ref{partizione}, we consider the subsets $\Omega_i$ given by \eqref{defOm_i}, with $i \in \{1,\dots, h\}$.

Now, we will show that, for every $i \in \{1,\dots, h\}$ and for every $u\in C^{\infty}_0(\Omega)$, it holds    
\begin{equation} \label{casoOmegai}		\int_{\Omega_i} |u|^q \, dx \le \left( \frac{2}{\pi_{p,q}} \right)^q \left( \frac{pq+p-q}{p-q} \right)^{\frac{p-q}{p}} \left( \int_{\Omega_i} d_{\Omega}^{\frac{p\,q}{p-q}} \, dx \right)^{\frac{p-q}{p}} \left( \int_{\Omega_i} |\nabla u|^p \, dx \right)^{\frac{q}{p}}.
	\end{equation} 
Indeed, let $i \in \{1,\dots, h\}$  and, up to translations and rotations, we can assume that  \[ H_i=\{ (y,t)\in \mathbb{R}^N:\, t=0 \}\] is the affine hyperplane containing $S_i$.
Then \begin{equation}\label{coincid}
	 	t=d_{S_i}(y,t)=d_{\Omega}(y,t), \qquad \mbox{ for every }(y,t)\in \Omega_i.
	 \end{equation}  

Thanks to Lemma \ref{partizione},  we have that  $\Pi_i: \overline{\partial \Omega_i\cap \Omega} \to S_i$  is a bijective and continuous function between two compact sets. 
Hence, defining $S'_i=\{ y\in \mathbb{R}^{N-1}:\, (y,0)\in S_i\}$, we obtain that there exists a continuous function $f_i: S'_i\to [0,+\infty)$ 
such that  
\begin{equation}\label{relfunz}  
(y,t)\in \overline{\partial \Omega_i\cap \Omega} \qquad \Longleftrightarrow \qquad y\in S'_i \quad \mbox{ and } \quad t=f_i(y),
\end{equation}
and it is easy to show that
\begin{equation}\label{propOm_i}
\Omega_i = \Big\{ (y,t) \in  S'_i\times \mathbb R:\, 0< t \le  f_i(y) \Big\}.
\end{equation}
Indeed, the inclusion ``$\subseteq$'' follows by using \eqref{coincid} and \eqref{relfunz}, while the converse one ``$\supseteq$'' is an application of Parts (3) and (6) of Lemma \ref{partizione}, taking into account that 
\[f_i(y)>0 \qquad \Longleftrightarrow \qquad (y,0)\in \mathrm{relint}(S_i). \]
%

Now, we recall that 
\[\left(\frac{\pi_{p,q}}{2}\right)^p=\min_{\varphi \in W^{1,p}((0,1))\setminus\{ 0\}} \left\{\frac{\displaystyle\int_0^1 |\varphi'|^p\,dt}{\left(\displaystyle\int_0^1 |\varphi|^q\,dt\right)^{\frac{p}{q}}}:\varphi(0)=0\right\}, \] 
(see \cite[Lemma A.1]{Bra2}), which implies that, for every $s>0$ 
\begin{equation}\label{poincare}
\left( \int_{0}^{s} |\varphi|^q \, dt \right)^{\frac{p}{q}} \le \left( \frac{2}{\pi_{p,q}} \right)^p s^{\frac{pq+p-q}{q}} \int_{0}^{s} |\varphi'|^p \, dt, \qquad  \mbox{for every } \varphi \in C_0^\infty((0,s]).
\end{equation}
Hence, for every $u\in C^{\infty}_0(\Omega)$ and for every $i \in \{1,\dots,h\}$, thanks to formula \eqref{propOm_i}, by using Fubini's Theorem and  \eqref{poincare} with $s=f_i(y)$, we get
\begin{equation}\label{eq:fubi}
\begin{split}
\int_{\Omega_i} |u(x)|^q \, dx &= \int_{S'_i} \int_{0}^{f_i(y)} |u(y,t)|^q \, dt\, dy \\
&\le \left( \frac{2}{\pi_{p,q}} \right)^q \int_{S'_i} f_i(y)^{\frac{pq+p-q}{p}} \left( \int_{0}^{f_i(y)} \left|\frac{\partial u}{\partial t } (y,t)\right|^p \, dt \right)^{\frac{q}{p}} \, dy \\
&\le \left( \frac{2}{\pi_{p,q}} \right)^q \left( \int_{S'_i} f_i(y)^{\frac{pq+p-q}{p-q}} \, dt \right)^{\frac{p-q}{p}} \left( \int_{S'_i} \int_{0}^{f_i(y)} \left|\frac{\partial u}{\partial t} (y,t)\right|^p \, dy \, dt \right)^{\frac{q}{p}},
\end{split}	
\end{equation}
where we also apply an H\"older's inequality in the last line. 
Taking into account \eqref{coincid}, we have that
\begin{equation}\label{eq:fdist}
	\begin{split}
		\int_{S'_i} f_i(y)^{\frac{pq+p-q}{p-q}} \, dy &= \left( \frac{pq+p-q}{p-q}\right) \int_{S'_i} \left( \int_{0}^{f_i(y)} t^{\frac{p\,q}{p-q}} \, dt \right) dy \\
		&=  \left( \frac{pq+p-q}{p-q} \right)\int_{S'_i} \left( \int_{0}^{f_i(y)} (d_{\Omega}(y,t))^{\frac{p\,q}{p-q}} \, dt\right) \, dy \\
	 &=\left( \frac{pq+p-q}{p-q} \right)\int_{\Omega_i}  d_{\Omega}^{\frac{p\,q}{p-q}}\,dx.	
\end{split}	
	\end{equation}
	
	By combining \eqref{eq:fubi} and \eqref{eq:fdist}, for every $i \in \{1,\dots,h\}$, we   obtain  \eqref{casoOmegai}.
		
		Finally, since  
	\[\Omega=\bigcup_{i=1}^h \Omega_i,\]
	by summing with respect to the index $i \in \{1,\dots,h\}$ in  \eqref{casoOmegai}, it follows that 
	\[\int_{\Omega} |u|^q \, dx \le \left( \frac{2}{\pi_{p,q}} \right)^q \left( \frac{pq+p-q}{p-q} \right)^{\frac{p-q}{p}} \sum_{i=1}^h \left(\int_{\Omega_i} d_{\Omega}^{\frac{p\,q}{p-q}} \, dx \right)^{\frac{p-q}{p}} \left( \int_{\Omega_i} |\nabla u|^{p}\, dx \right)^{\frac{q}{p}}.\]
	By  applying  H\"older's inequality 
	\[ \| a \, b \|_{\ell^1} \le \|a\|_{\ell^r} \, \|b\|_{\ell^{r'}}, \]
	with $r=p/q$, we get 
	\[ \int_{\Omega} |u|^q \, dx \le \left( \frac{2}{\pi_{p,q}} \right)^q \left( \frac{pq+p-q}{p-q} \right)^{\frac{p-q}{p}}\left( \int_{\Omega} d_{\Omega}^{\frac{p\,q}{p-q}} \, dx \right)^{\frac{p-q}{p}} \left( \int_{\Omega} |\nabla u|^{p}\, dx \right)^{\frac{q}{p}}. \]
	and by raising to the power $p/q$ on both sides, this in turns implies  
	\[ \frac{\displaystyle\int_{\Omega} |\nabla u|^{p}\, dx}{\left(\displaystyle\int_{\Omega} |u|^q \, dx\right)^\frac{p}{q}} \ge \left( \frac{\pi_{p,q}}{2} \right)^p \left( \frac{p-q}{pq+p-q} \right)^{\frac{p-q}{q}} \frac{1}{\left( \displaystyle\int_{\Omega} d_{\Omega}^{\frac{p\,q}{p-q}} \, dx \right)^{\frac{p-q}{q}}}, \qquad \mbox{for every } u\in C^{\infty}_0(\Omega). \] 
	
\noindent Taking the infimum on $C^{\infty}_0(\Omega)$ on the left-hand side, we get that for every polytope $K \subset \mathbb{R}^N$, the set $\Omega=\mathrm{int}(K)$ satisfies the lower bound \eqref{eq:makai2dim}, as desired.

\vskip.2cm

\noindent {\it Part 2: approximation argument.}
If $\Omega\subsetneq \mathbb{R}^N$ is a general convex bounded open set, thanks to \cite[Theorem 1.8.19]{S}, for every $0<\varepsilon \ll 1$, there exists a polytope $K_{\varepsilon}$ such that
\[ K_{\epsilon}\subseteq \Omega\subseteq \overline{ \Omega }\subseteq \frac{1} {1-\varepsilon}K_{\varepsilon}.\]
In particular, we have that 
\[ (1-\varepsilon)\, \Omega\subseteq \mathrm{int}(K_{\varepsilon}).\]
Then, thanks to Part (1) of the proof, we obtain that 
\[ \begin{split}
 \frac{\lambda_{p,q}(\Omega)}{(1- \varepsilon)^{p-N+N\frac{p}{q}}}=\lambda_{p,q}((1-\varepsilon)\,\Omega)\ge \lambda_{p,q}(\mathrm{int}(K_\varepsilon)) \ge \frac{C_{p,q}}{\left(\displaystyle\int_{\mathrm{int}(K_\varepsilon)} d_{\mathrm{int}(K_\varepsilon)}^{\frac{p\,q}{p-q}} \, dx \right)^{\frac{p-q}{q}}}\ge \frac{C_{p,q}}{\left(\displaystyle\int_{\Omega} d_{\Omega}^{\frac{p\,q}{p-q}} \, dx \right)^{\frac{p-q}{q}}}, 
\end{split} \]
where the last inequality follows from the fact that 
 $d_{\mathrm{int}(K_\varepsilon)}(x) \le d_{\Omega}(x)$, for every $x \in K_{\epsilon}$. 
Then, by sending $\varepsilon \to 0^+$, we get that $\Omega$ satisfies  \eqref{eq:makai2dim}.

\vskip.2cm

\noindent {\it Part 3: sharpness.} Now we will show that estimate \eqref{eq:makai2dim} is asimptotically sharp for slab-type sequences 
\[ \Omega_L = \left( -\frac L 2 ,\frac L 2  \right)^{N-1} \times (0,1),  \qquad \mbox{ with } L \ge 1. \]
\vskip.1cm

\noindent With this aim, we denote by $y=(x_1, \dots, x_{N-1})$ a point of $\mathbb{R}^{N-1}$. Let $S_0$ and $S_1$ be the facets of $\Omega_L$ contained, respectively, in the hyperplanes $\left\{(y,t)\in \mathbb{R}^N : t=0\right\}$ and  $\left\{(y,t)\in \mathbb{R}^N : t=1\right\}$, and 
we define the {\it lateral surface} $\mathcal{S}_L$ of $\Omega_L$ as the following union:
\[ \mathcal{S}_L:= \bigcup_{i=1}^{N-1} \left\{(y,t) \in \Omega_L: x_i=\pm \frac{L}{2}, \,0< t < 1\right\}. \]
Then, we define the set $\Omega_1$ as
\[ \Omega_1 = \Big\{ x \in \Omega_L: d_{\mathcal{S}_L}(x) \ge 1 \Big\}. \]
Since $|\Omega_1|=(L-2)^{N-1}$, we obtain that  
\[|\Omega_L \setminus \Omega_1|=L^{N-1}-(L-2)^{N-1} \sim  L^{N-2}\,C_N, \qquad \mbox{ as } L \to \infty,\]
which implies
\[\int_{\Omega_L \setminus \Omega_1} d_{\Omega_{L}}^{\frac{p\,q}{p-q}} \,dx \le r_{\Omega_L}^{\frac{p\,q}{p-q}} |\Omega_L \setminus \Omega_1| \sim L^{N-2}\, C_N\,\left(\frac 1 2\right)^{\frac{p\,q}{p-q}}, \qquad \mbox{ as } L \to \infty.\]

\noindent Moreover, since, for every $(y,t) \in \Omega_1$, it holds that 
\[ d_{\Omega_{L}} (y,t)= d_{S_0\cup S_1}(y,t)=\min\{t, 1-t\},\] we obtain 
\[
\int_{\Omega_1} d_{\Omega_{L}}^{\frac{p\,q}{p-q}} \,dt =\left(\int_{\left(-\frac{L}{2}+1, \frac{L}{2}-1\right)^{N-1}} \,dy\right) \left(2\,\int_{0}^{1/2}  t^{\frac{p\,q}{p-q}} \, dt\right) =L^{N-1} \,\left(\frac 1 2\right)^{\frac{p\,q}{p-q}}\,\frac{p-q}{pq+p-q}. 
\]
 
In particular, since
\[ 
\begin{split}
\int_{\Omega_L} d_{\Omega_{L}}^{\frac{p\,q}{p-q}} \, dx &= \int_{\Omega_L \setminus \Omega_1} d_{\Omega_{L}}^{\frac{p\,q}{p-q}} \,dx + \int_{\Omega_1} d_{\Omega_{L}}^{\frac{p\,q}{p-q}} \,dx,
\end{split}
\]
we have the following asymptotic behavior
\[  \int_{\Omega_L} d_{\Omega_{L}}^{\frac{p\,q}{p-q}} \, dx \sim L^{N-1} \, \left(\frac{1}{2}\right)^{\frac{p\,q}{p-q}} \,\frac{p-q}{pq+p-q}, \qquad \mbox{ as } L \to \infty. \] 

Hence, we finally obtain
\[
\frac{C_{p,q}}{ \left( \displaystyle\int_{\Omega_L} d_{\Omega_L}^{\frac{pq}{p-q}} \, dx \right)^{\frac{p-q}{q}}}=\left( \frac{\pi_{p,q}}{2} \right)^p \left( \frac{p-q}{pq+p-q} \right)^{\frac{p-q}{q}} \frac{1}{ \left( \displaystyle\int_{\Omega_L} d_{\Omega_L}^{\frac{p\,q}{p-q}} \, dx \right)^{\frac{p-q}{q}}} \sim \left( \pi_{p,q} \right)^p \frac{1}{L^{\frac{(p-q)(N-1)}{q}}}, \qquad \mbox{ as } L \to \infty. 
\]

On the other hand, by \cite[Main Theorem]{Bra1}, the following upper bound holds 
\[ \lambda_{p,q}(\Omega) < \Big( \frac{\pi_{p,q}}{2} \Big)^p \left( \frac{P(\Omega)}{|\Omega|^{1-\frac{1}{p}+\frac{1}{q}}} \right)^p \]
and it is asimptotically sharp for slab--type sequences $\Omega_L$. Finally, since 
\[ P(\Omega_L) \sim 2 \, L^{N-1} \qquad \mbox{ and } \qquad |\Omega_L| \sim  L^{N-1}, \qquad \mbox{ as } L \to \infty,\] 
we obtain
\[ \lambda_{p,q}(\Omega_L) \sim ( \pi_{p,q} )^p \frac 1 {L^{\frac{(p-q)(N-1)}{q}}}, \qquad \mbox{ as } L \to \infty. \]
The proof is over.
\qed

\vskip.2cm

As an  easy consequence of Theorem \ref{thm:makai}, we can give an alternative proof of the sharp lower bound \eqref{eq:hpinequality}.

\vskip.3cm

\noindent{\it Proof of Corollary \ref{cor:HP}.}  
We use the same notation as in Theorem \ref{thm:makai}. Thanks to Part (3) of Lemma \ref{partizione}, it holds that
\begin{equation}\label{upbound} 
f_i(y)\le r_{\Omega}, \qquad \mbox{for every } y\in S_i'.
\end{equation}
By combining \eqref{eq:fdist} and \eqref{upbound}, we obtain that
	\[ \begin{split}
		\int_{\Omega} d_{\Omega}^{\frac{p\,q}{p-q}} \, dx
		&= \sum_{i=1}^h \int_{S_i'} \int_{0}^{f_i(y)} (d_{\Omega}(y,t))^{\frac{p\,q}{p-q}} \, dt \, dy \\
		&= \frac{p-q}{pq+p-q} \sum_{i=1}^h \int_{S_i'} (f_i(y))^{\frac{p\,q}{p-q}+1} \, dy \\
		&\le \frac{p-q}{pq+p-q} r_{\Omega}^{\frac{p\,q}{p-q}} \left( \sum_{i=1}^h \int_{S_i'} f_i(y) \, dy \right) \\
		&= \frac{p-q}{pq+p-q} r_{\Omega}^{\frac{p\,q}{p-q}} \left( \sum_{i=1}^h \int_{S_i'} \int_{0}^{f_i(y)} \, dt \, dy \right) \\
		&= \frac{p-q}{pq+p-q} r_{\Omega}^{\frac{p\,q}{p-q}} |\Omega|.
	\end{split}\]
and, by applying the above estimate in \eqref{eq:makai2dim}, we get the desired lower bound \eqref{eq:hpinequality}. Finally, we obtain that this inequality is asymptotically sharp  by using  the slab-type sequences
\[ \Omega_L = \left( -\frac L 2 ,\frac L 2  \right)^{N-1} \times (0,1),  \qquad \mbox{ with } L \ge 1, \]
as it was proved in \cite[Theorem 5.7]{BPZ1}.
\qed

\begin{rem}
Let $\Omega \subsetneq \mathbb{R}^N$ be a convex bounded open set,  then, from the proof of Corollary \ref{cor:HP}, it follows that 
\[ \int_{\Omega} d_{\Omega}^{\,\alpha} \, dx \le |\Omega|\, \frac{r_{\Omega}^{\,\alpha}}{\alpha+1}, \qquad \mbox{ for every } \alpha>0. \]
Moreover, such an estimate is sharp and equality is asymptotically attained by slab--type sequences $\Omega_L$, as in Corollary \ref{cor:HP}.  
\end{rem}


\section{Alternative proof for Hersch-Protter-Kajikiya inequality for $\lambda_p$}\label{sec:HP}
We now focus on the case $1<p=q<\infty$. In this section we provide an alternative proof for the Hersch--Protter-type inequality for $\lambda_p$ in any dimension $N \ge 2$. 
More precisely, we show the following result:
\begin{thm}[Hersch-Protter-Kajikiya inequality]\label{thm:hersch-protter}
Let $1 < p < \infty$ and let $\Omega \subsetneq \mathbb{R}^N$ be a convex bounded open set. Then, the following lower bound holds
\[
\lambda_{p}(\Omega) \ge \left( \frac{\pi_p}{2} \right)^p \frac{1}{r_{\Omega}^{\,p}}.
\]
Moreover, the estimate is sharp.
\end{thm}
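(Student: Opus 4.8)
The plan is to prove the inequality first for convex sets whose boundary is of class $C^{2}$, where the Crasta--Malusa change of variables formula \eqref{thm:crastamalusa} (see \cite[Theorem 7.1]{CM}) is available, and then to recover the general case by approximation. For the approximation step I would invoke a suitable approximation result for convex sets (cf.\ \cite{S}) to produce convex open sets $\Omega_{k}$ with $\partial\Omega_{k}$ of class $C^{2}$ such that $\Omega\subseteq\Omega_{k}$ and $\Omega_{k}\to\Omega$ in the Hausdorff sense; then $r_{\Omega_{k}}\to r_{\Omega}$ by continuity of the inradius, while $C^{\infty}_{0}(\Omega)\subseteq C^{\infty}_{0}(\Omega_{k})$ gives $\lambda_{p}(\Omega)\ge\lambda_{p}(\Omega_{k})$. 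Hence, once we know $\lambda_{p}(\Omega_{k})\ge(\pi_{p}/2)^{p}\,r_{\Omega_{k}}^{-p}$ for every $k$, letting $k\to\infty$ yields the claim. So from now on assume $\partial\Omega\in C^{2}$.

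Write each $x\in\Omega$ (off a negligible set) as $x=y+t\,\nu(y)$ with $y\in\partial\Omega$, $\nu(y)$ the inner unit normal, and $0<t<\ell(y)$, where $\ell(y)$ is the distance from $y$ to the cut locus along the ray $t\mapsto y+t\,\nu(y)$. Then \eqref{thm:crastamalusa} reads
\[
\int_{\Omega} g\,dx=\int_{\partial\Omega}\int_{0}^{\ell(y)} g\big(y+t\,\nu(y)\big)\,\mathcal{J}(y,t)\,dt\,d\mathcal{H}^{N-1}(y)
\]
for every nonnegative Borel $g$, where the weight $\mathcal{J}(y,\cdot)$ is nonnegative and, since $\Omega$ is convex, nonincreasing (being a product of factors $1-t\,\kappa_{i}(y)$ with principal curvatures $\kappa_{i}(y)\ge 0$). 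I would also record two elementary facts on $d_{\Omega}$: along the ray one has $d_{\Omega}(y+t\,\nu(y))=t$ for $t\le\ell(y)$, whence $\ell(y)\le r_{\Omega}$; and $\nabla d_{\Omega}(y+t\,\nu(y))=\nu(y)$, so for $u\in C^{\infty}_{0}(\Omega)$ the one-variable function $w_{y}(t):=u(y+t\,\nu(y))$ vanishes for $t$ near $0$ (as $\mathrm{supp}\,u$ is compact in $\Omega$) and satisfies $|w_{y}'(t)|=|\nabla u(y+t\,\nu(y))\cdot\nu(y)|\le|\nabla u(y+t\,\nu(y))|$.

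The heart of the argument is a weighted one-dimensional Poincaré inequality: if $J\colon[0,L]\to[0,\infty)$ is nonincreasing and $w\in C^{\infty}([0,L])$ vanishes near $t=0$, then
\[
\int_{0}^{L}|w|^{p}\,J\,dt\le\Big(\frac{2L}{\pi_{p}}\Big)^{p}\int_{0}^{L}|w'|^{p}\,J\,dt .
\]
To prove it I would write $J(t)=\int_{0}^{\infty}\mathbf{1}_{\{J(t)>\lambda\}}\,d\lambda$ and observe that, $J$ being nonincreasing, each super-level set $\{t\in[0,L]:J(t)>\lambda\}$ is an interval of the form $[0,\beta(\lambda))$ with $\beta(\lambda)=\sup\{t\in[0,L]:J(t)>\lambda\}$; by Fubini's theorem $\int_{0}^{L}|w|^{p}J\,dt=\int_{0}^{\infty}\big(\int_{0}^{\beta(\lambda)}|w|^{p}\,dt\big)\,d\lambda$, and applying \eqref{poincare} with $q=p$ on each $[0,\beta(\lambda)]$ (using $w(0)=0$ and $\beta(\lambda)\le L$) bounds the inner integral by $(2L/\pi_{p})^{p}\int_{0}^{\beta(\lambda)}|w'|^{p}\,dt$; integrating in $\lambda$ and undoing Fubini gives the claim. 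Applying this with $L=\ell(y)\le r_{\Omega}$, $J=\mathcal{J}(y,\cdot)$ and $w=w_{y}$, then using $|w_{y}'|\le|\nabla u|$ along the ray and integrating over $\partial\Omega$ via \eqref{thm:crastamalusa} once more, one gets
\[
\int_{\Omega}|u|^{p}\,dx\le\Big(\frac{2\,r_{\Omega}}{\pi_{p}}\Big)^{p}\int_{\Omega}|\nabla u|^{p}\,dx,\qquad\text{for every }u\in C^{\infty}_{0}(\Omega),
\]
so that $\lambda_{p}(\Omega)\ge(\pi_{p}/2)^{p}\,r_{\Omega}^{-p}$ after taking the infimum over $u$; with the approximation step this proves the bound. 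Sharpness follows, as in Part~3 of the proof of Theorem~\ref{thm:makai} and in \cite[Theorem 5.7]{BPZ1}, from the slabs $\Omega_{L}=(-L/2,L/2)^{N-1}\times(0,1)$, for which $r_{\Omega_{L}}=1/2$ and $\lambda_{p}(\Omega_{L})\to\pi_{p}^{p}$ as $L\to\infty$, whence $\lambda_{p}(\Omega_{L})\,r_{\Omega_{L}}^{\,p}\to(\pi_{p}/2)^{p}$.

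I expect the main obstacle to be the reduction from the $N$-dimensional Rayleigh quotient to the one-dimensional inequality carrying the genuinely non-constant Jacobian weight $\mathcal{J}(y,\cdot)$: it is crucial that convexity forces this weight to be nonincreasing, which is exactly what makes its super-level sets intervals anchored at $t=0$ and so lets \eqref{poincare} be used slice by slice, and that every normal ray length $\ell(y)$ be controlled by the inradius $r_{\Omega}$ in order to land on the sharp constant. A secondary technical point is to arrange the approximation so that the monotonicity of $\lambda_{p}$ and the convergence $r_{\Omega_{k}}\to r_{\Omega}$ act in the correct direction, which is why I would approximate $\Omega$ from outside rather than from inside.
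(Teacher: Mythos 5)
Your proposal is correct and follows the same overall architecture as the paper's proof: reduce to $C^2$ convex sets by outer approximation, apply the Crasta--Malusa change of variables, bound the normal ray length $\ell(y)$ by $r_\Omega$, and reduce to a one-dimensional weighted Poincaré inequality in which the Jacobian weight $\prod_i(1-t\,k_i(y))$ is nonincreasing by convexity. The genuine difference is in how the key one-dimensional lemma --- that a nonincreasing weight can only increase the Poincaré constant, i.e.\ $\mu_p(w,(0,L))\ge\mu_p(1,(0,L))$ --- is established. The paper proves this (Theorem~\ref{thm:quotients}) by producing a monotone eigenfunction for the unweighted problem, mollifying the weight, and invoking Picone's identity, followed by a truncation to remove the boundedness hypothesis. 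You instead write the weight via the layer-cake formula, observe that monotonicity makes every super-level set an interval $[0,\beta(\lambda))$ anchored at the Dirichlet endpoint, apply the unweighted inequality \eqref{poincare} slice by slice, and undo Fubini. This is a more elementary and shorter argument that bypasses Picone's inequality and the mollification entirely, and also makes transparent exactly where monotonicity of the weight is used; its only cost is that it gives the inequality but not any information about minimizers of the weighted problem. Your approximation step is also slightly streamlined: since $\Omega\subseteq\Omega_k$ gives $\lambda_p(\Omega)\ge\lambda_p(\Omega_k)$ directly, the auxiliary $(1-\varepsilon)$-scaling in the paper's Part~2 is not needed, and $r_{\Omega_k}\to r_\Omega$ follows from the elementary sandwich $r_\Omega\le r_{\Omega_k}\le r_\Omega+1/k$.
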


\vskip.2cm

With this aim, we need some preliminary results.

\subsection{Change of variables theorem} First of all, we recall the change of variables formula which follows from \cite[Theorem 7.1]{CM} when $K=\overline{ B}_1$ (see also \cite[Example 5.6]{CM}). 

\begin{thm} 
Let $\Omega \subset \mathbb{R}^N$ be a bounded open connected set of class $C^2$. Let $l:\partial \Omega \to \mathbb{R}$ be the function given by  
\begin{equation}\label{elle}
\begin{split}
l(x)= \sup\left\{d_{\Omega}(z): z \in \overline{\Omega} \mbox{ and } x \in \Pi(z)\right\},
\end{split}
\end{equation}
where 
\[ \Pi(z)=\{ x\in \partial \Omega: d_{\Omega}(z)=|x-z| \}, \] 
and let $\Phi=\partial \Omega \times \mathbb{R} \to \mathbb{R}^N$ be the map defined by 
\[\Phi(x,t)=x +t \nu(x), \qquad \mbox{for every } (x,t) \in \partial \Omega \times \mathbb{R}, \]
where, for every $x\in \partial \Omega$, $\nu(x)$ is the inward normal unit vector to $\partial \Omega$ at $x$. Then, for every $h \in L^1(\Omega)$, it holds
\begin{equation}\label{thm:crastamalusa}
\int_{\Omega} h(x) \, d x = \int_{\partial \Omega} \left( \int_{0}^{l(x)} h(\Phi(x,t)) \, \prod_{i=1}^{N-1} (1-t k_i(x)) \, d t \right) \, d \mathcal{H}^{N-1}(x). 
\end{equation}
\end{thm}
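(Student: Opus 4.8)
The plan is to read off \eqref{thm:crastamalusa} from the general anisotropic change-of-variables formula of \cite[Theorem 7.1]{CM} by specialising the convex body appearing there to the Euclidean ball $K=\overline{B}_1$. With this choice every anisotropic object in \cite{CM} collapses to its Euclidean analogue: the $K$-distance to $\partial\Omega$ becomes $d_{\Omega}$, the Finsler normal map becomes the Euclidean inward unit normal $\nu$, the anisotropic metric projection onto $\partial\Omega$ becomes the nearest-point projection $\Pi$, the ``cut function'' of \cite{CM} becomes precisely the function $l$ of \eqref{elle}, and the anisotropic principal curvatures become the classical principal curvatures $k_1(x),\dots,k_{N-1}(x)$ of $\partial\Omega$, so that the anisotropic Jacobian factor is $\prod_{i=1}^{N-1}(1-t\,k_i(x))$. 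Since $\partial\Omega$ is of class $C^2$, we have $\nu\in C^1(\partial\Omega;\mathbb{S}^{N-1})$, the curvatures are continuous and $\Phi$ is $C^1$, so the hypotheses of \cite[Theorem 7.1]{CM} are met and substituting this dictionary gives \eqref{thm:crastamalusa} verbatim.

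For completeness I would also record the self-contained argument behind it. Because $\partial\Omega\in C^2$, the map $\Phi(x,t)=x+t\,\nu(x)$ is $C^1$ and its differential splits into a tangential block $\mathrm{Id}-t\,W_x$, where $W_x$ is the shape operator of $\partial\Omega$ at $x$ (with eigenvalues $k_1(x),\dots,k_{N-1}(x)$), and a normal block equal to the unit vector $\nu(x)$; hence its tangential Jacobian is
\[
J\Phi(x,t)=\big|\det(\mathrm{Id}-t\,W_x)\big|=\prod_{i=1}^{N-1}\big|1-t\,k_i(x)\big|.
\]
On the normal-coordinate region $U=\{(x,t):x\in\partial\Omega,\ 0\le t<l(x)\}$ one checks, straight from the definition \eqref{elle} of $l$, that $\Phi$ is injective, that $d_{\Omega}(\Phi(x,t))=t$ (so in particular $1-t\,k_i(x)>0$ there and the absolute values may be dropped), and that $\Phi(U)=\overline{\Omega}\setminus\mathcal{R}$, where $\mathcal{R}$ is the \emph{ridge}, i.e.\ the set of points of $\overline{\Omega}$ with more than one nearest point on $\partial\Omega$. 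Applying the area formula to the $C^1$ map $\Phi|_U$ then yields
\[
\int_{\Phi(U)}h(y)\,dy=\int_{\partial\Omega}\int_0^{l(x)}h(\Phi(x,t))\,\prod_{i=1}^{N-1}(1-t\,k_i(x))\,dt\,d\mathcal{H}^{N-1}(x),
\]
which is \eqref{thm:crastamalusa} provided $|\mathcal{R}|=0$; the general case $h\in L^1(\Omega)$ follows from the case of nonnegative measurable $h$ by monotone convergence, and that case from continuous $h$ by a routine density argument.

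The main obstacle is exactly the fact that the ridge $\mathcal{R}$ is Lebesgue-negligible, together with the companion measurability statements needed to run the area formula: that $l$ is measurable (indeed upper semicontinuous for $C^2$ boundaries, which makes $U$ Borel) and that $\Phi|_U$ is a bijection onto $\overline{\Omega}\setminus\mathcal{R}$. The negligibility of $\mathcal{R}$ is classical: near $\partial\Omega$ the function $d_{\Omega}$ is semiconcave, hence differentiable almost everywhere, and its points of non-differentiability are precisely the points of $\mathcal{R}$. In the route through \cite{CM} this is already incorporated into the statement, so there one only has to verify the $K=\overline{B}_1$ dictionary; in the self-contained route it is the single non-formal ingredient, everything else being a differential-geometric computation together with the classical area formula.
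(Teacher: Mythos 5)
Your first paragraph reproduces exactly the paper's argument: the identity \eqref{thm:crastamalusa} is read off from \cite[Theorem 7.1]{CM} by specialising the convex body to $K=\overline{B}_1$, so that every anisotropic notion (Finsler normal, anisotropic projection, cut function, anisotropic curvatures) degenerates to its Euclidean counterpart. The paper goes no further and simply cites the reference, so the self-contained area-formula sketch you append is supplementary material beyond the paper's proof, but it is consistent with what the cited theorem asserts.
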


\noindent Here $k_1(x), \dots, k_{n-1}(x)$ are the principal curvatures of $\partial \Omega$ at $x$, i.\,e. the eigenvalues of the Weingarten map $W(x):T_{x} \to T_{x}$, where $T_x$ denotes the tangent space to $\partial \Omega$ at $x$.
Thanks to the $C^2$ regularity assumption on $\Omega$, it follows that $k_i$ is continuous on $\partial \Omega$ for every $i\in \{1,\dots,N-1\}$.

\begin{rem}
We note that, when $\Omega$ is a polytope, with the notation of Lemma \ref{partizione}, we have that $l(x)=|x-\Pi^{-1}_i(x)|$, for every $x\in {\rm relint}(S_i)$.
\end{rem}

\subsection{Weighted Rayleigh quotients}
 Let $w: (0,L) \to \mathbb{R}$ be a monotone non-increasing positive function, $w\not\equiv 0$. For every $1<p<\infty$, we define the following weighted Rayleigh quotient 
\[ \mu_p(w, (0,L)):=\inf_{\psi \in C_0^{\infty}((0,L])\setminus \{0\}} \left\{ \frac{ \displaystyle\int_{0}^L |\psi'(t)|^p \, w(t) \, d t}{\displaystyle\int_{0}^L |\psi(t)|^p \, w(t) \, d t} \right\}.\]

\noindent When $w\equiv 1$ on $(0,L)$, we will write $\mu_p(w, (0,L))=\mu_p(1, (0,L))$.  

\noindent With the aim to show that $\mu_p(1, (0,L))\le \mu_p(w, (0,L))$ for every monotone non-increasing positive weight $w: (0,L) \to \mathbb{R}$, first we prove that there exists a monotone non-increasing positive  minimizer for $\mu_p(1, (0,L))$.
\begin{lemma}\label{lem:monot}
Let $1<p<\infty$, then 
\[ \mu_p(1, (0,L))=L^{-p}\left(\frac{\pi_p} 2 \right)^p.\]
In particular, there exists a positive and monotone non-decreasing solution of the minimization problem  \begin{equation}\label{eq:weightedprobl}  \mu_p(1, (0,L)) = \inf_{\psi \in W^{1,p}((0,L))\setminus \{0\}, \psi(0)=0} \left\{ \frac{ \displaystyle\int_{0}^L |\psi'(t)|^p\, dt}{\displaystyle\int_{0}^L |\psi(t)|^p\, dt} \right\}.
\end{equation}
\end{lemma}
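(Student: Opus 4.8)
The plan is to establish the explicit value of $\mu_p(1,(0,L))$ by a direct identification with the one-dimensional $p$-eigenvalue problem, and then to extract a monotone minimizer by a rearrangement/reflection argument. First I would observe that, by density, the infimum over $C_0^\infty((0,L])$ coincides with the infimum over those $\psi\in W^{1,p}((0,L))$ with $\psi(0)=0$ (no condition at $t=L$ is imposed since test functions need not vanish there); this is the content of \eqref{eq:weightedprobl}. The functional is then a standard coercive, weakly lower semicontinuous Rayleigh quotient on the reflexive space $W^{1,p}((0,L))$ intersected with the closed affine constraint $\{\psi(0)=0\}$, so the direct method yields a minimizer $\psi_0$, which we may take nonnegative (replacing $\psi_0$ by $|\psi_0|$ does not change either integral, and $|\psi_0|\in W^{1,p}$ with $|\psi_0|(0)=0$).

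Next I would identify the value of the infimum. By the change of variable $t\mapsto L\,t$, the problem \eqref{eq:weightedprobl} scales as $\mu_p(1,(0,L))=L^{-p}\,\mu_p(1,(0,1))$, so it suffices to treat $L=1$. On $(0,1)$ the quotient $\int_0^1|\psi'|^p/\int_0^1|\psi|^p$ over $W^{1,p}$ functions vanishing only at $0$ is, by the reflection trick, exactly $\big(\tfrac{\pi_p}{2}\big)^p$: indeed, extending a minimizer of the half-constrained problem by even reflection about $t=1$ produces a function in $W^{1,p}_0((0,2))$ (it vanishes at both endpoints $0$ and $2$ provided the minimizer vanishes at $1$, which the Euler--Lagrange equation forces since the natural boundary condition at the free endpoint is $\psi'(1)=0$, and the first eigenfunction of the Neumann-type endpoint problem is, after reflection, the first Dirichlet eigenfunction on $(0,2)$), and conversely any $W^{1,p}_0((0,2))$ function symmetric about $t=1$ restricts to an admissible competitor on $(0,1)$ with the same quotient. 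Since $\lambda_{1,p}((0,2))=(2/\,\cdot\,)$-scaled version of $\pi_p^p$, more precisely $\mu_p(1,(0,2))=(\pi_p/2)^p$ by definition of $\pi_p$ in \eqref{eq:pipq} together with the $L=2$ scaling, one gets $\mu_p(1,(0,1))=(\pi_p/2)^p$ and hence $\mu_p(1,(0,L))=L^{-p}(\pi_p/2)^p$.

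Finally I would argue monotonicity of the minimizer. The nonnegative minimizer $\psi_0$ on $(0,L)$ solves, in the weak sense, the Euler--Lagrange equation $-(\,|\psi_0'|^{p-2}\psi_0'\,)' = \mu_p(1,(0,L))\,|\psi_0|^{p-2}\psi_0$ with $\psi_0(0)=0$ and natural condition $|\psi_0'|^{p-2}\psi_0'(L)=0$, i.e. $\psi_0'(L)=0$. The corresponding ODE with these boundary data has, up to positive scaling, a unique nonnegative solution, namely (after the reflection identification above) the restriction to $(0,L)$ of the first Dirichlet eigenfunction on $(0,2L)$, which is the generalized sine function $\sin_p$; this is nondecreasing on $(0,L)$ (it increases from $0$ at $t=0$ to its maximum at $t=L$). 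Alternatively, and avoiding any appeal to uniqueness, one can use the monotone decreasing rearrangement of $\psi_0$ read from the endpoint $t=L$: replacing $\psi_0$ by the nondecreasing function with the same distribution function (decreasing rearrangement centered so that the maximum sits at $L$ and the zero set sits near $0$) preserves $\int_0^L|\psi_0|^p$ and does not increase $\int_0^L|\psi_0'|^p$ by the P\'olya--Szeg\H{o} inequality in one dimension, while keeping the boundary value $0$ at the left endpoint; hence the rearranged function is still a minimizer and is monotone nondecreasing by construction.

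The main obstacle is the correct bookkeeping of the boundary conditions in the reflection argument: one must be careful that the half-interval problem \eqref{eq:weightedprobl} imposes $\psi(0)=0$ but leaves $t=L$ free, so that the reflection producing a $W^{1,p}_0((0,2L))$ function is licit only because the minimizer automatically satisfies the Neumann condition $\psi'(L)=0$ (equivalently, because the even extension of such a minimizer across $t=L$ lies in $W^{1,p}$). I would therefore spell out this Euler--Lagrange/natural-boundary-condition step carefully, or else bypass it entirely via the one-dimensional P\'olya--Szeg\H{o} rearrangement, which simultaneously delivers both the sharp constant and the existence of a monotone minimizer without any regularity discussion.
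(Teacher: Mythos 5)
Your route is essentially the same as the paper's, which simply cites \cite[Lemma A.1]{Bra2} for the value $\mu_p(1,(0,1))=(\pi_p/2)^p$ and for a symmetric monotone minimizer, and then performs the change of variables $\tilde v(t)=v\bigl((L-t)/(2L)\bigr)$; you rederive the content of that citation via the reflection identification with $\lambda_p((0,2L))$ and a rearrangement, which is exactly the right idea.

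That said, your bookkeeping of the reflection step contains a misconception worth correcting, even though the final conclusion survives. You claim the even reflection across $t=L$ lies in $W^{1,p}_0((0,2L))$ ``provided the minimizer vanishes at $1$'', that the natural condition $\psi'(L)=0$ forces this, and that the reflection is ``licit only because'' of this Neumann condition. None of these is right. The even reflection $\tilde\psi(t):=\psi(2L-t)$ for $t\in(L,2L)$ satisfies $\tilde\psi(0)=\tilde\psi(2L)=\psi(0)=0$, so it vanishes at the endpoints of $(0,2L)$ for \emph{every} admissible $\psi$; no condition at $t=L$ enters, and indeed the minimizer cannot vanish at $L$, since otherwise it would be admissible for the full Dirichlet problem on $(0,L)$, whose eigenvalue $L^{-p}\pi_p^p$ is strictly larger than $L^{-p}(\pi_p/2)^p$. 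Likewise, the even extension of any $W^{1,p}((0,L))$ function is automatically in $W^{1,p}((0,2L))$: continuity at the reflection point suffices, a kink in the derivative is permitted, and $\psi'(L)=0$ is irrelevant for Sobolev membership (it only matters if you want the reflection to solve the Euler--Lagrange equation across $t=L$, which you never need for the lower bound). Once this is seen, reflection of an arbitrary competitor gives $\mu_p(1,(0,L))\ge\lambda_p((0,2L))=L^{-p}(\pi_p/2)^p$ with no appeal to minimizer regularity, and restricting the symmetric first Dirichlet eigenfunction on $(0,2L)$ gives the reverse inequality and the monotone minimizer in one stroke. Your P\'olya--Szeg\H{o} alternative for monotonicity is also valid, and is itself most cleanly justified through this same reflection: evenly extend the nonnegative minimizer to $(0,2L)$, apply the symmetric decreasing rearrangement there, and restrict back to $(0,L)$.
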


\begin{proof} First we note that,  thanks to  the density of  $C^{\infty}_0((0,L])$ in the subspace $\{\varphi \in W^{1,p}(0,L):\, \varphi (0)=0\}$, we have that \eqref{eq:weightedprobl} holds.
Moreover, following the proof of \cite[Lemma A.1]{Bra2}, we have that
\[\mu_p(1, (0,L))=L^{-p} \mu_p(1, (0,1)) = L^{-p}\left(\frac  {\pi_p} 2 \right)^p, \]
and there exists a positive symmetric function  $v\in W^{1,p}_0\left(\left(-\frac 1 2, \frac 1 2 \right)\right)$ which is monotone non-increasing on $\left(0,\frac 1 2\right)$,  
such that 
\[ \pi_p^p= \frac{ \displaystyle\int_{0}^{\frac12} |v'(t)|^p  \, d t}{\displaystyle\int_{0}^{\frac 1 2} |v(t)|^p \, d t}.\]
Then, it is easy to show that the function $\tilde v\in W^{1,p}( (0,L))$ defined by 
\[ \tilde v(t):=v\left(\frac{L-t}{2L}\right)\] 
satisfies $\tilde v(0)=0$ and it is a positive monotone non-decreasing minimizer of the problem \eqref{eq:weightedprobl}.
\end{proof}

Now we are in position to show the following minimization result.

\begin{thm}\label{thm:quotients}
Let $1 < p < \infty$ and let $w$ be a positive and monotone non-increasing function on $(0,L)$, then
\begin{equation}\label{eq:q=p}	
\mu_p(w, (0,L)) \ge \mu_p(1, (0,L)).
\end{equation}
\end{thm}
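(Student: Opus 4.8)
The plan is to reduce the weighted inequality to the unweighted one on the subintervals $(0,\ell)\subseteq(0,L)$, exploiting two facts: by Lemma~\ref{lem:monot} the unweighted constant $\mu_p(1,(0,\ell))=\ell^{-p}(\pi_p/2)^p$ is non-increasing in $\ell$, and a non-increasing weight is a superposition of characteristic functions of intervals anchored at $0$ and of length at most $L$.

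Concretely, I would fix an arbitrary $\psi\in C^\infty_0((0,L])\setminus\{0\}$ and use the layer-cake formula $w(t)=\int_0^\infty \mathbf{1}_{\{w(t)>\lambda\}}\,d\lambda$. Since $w$ is non-increasing, for each $\lambda\ge 0$ the superlevel set $\{t\in(0,L):w(t)>\lambda\}$ coincides, up to a null set, with the interval $(0,\ell(\lambda))$, where $\ell(\lambda):=\sup\{t\in(0,L):w(t)>\lambda\}\in[0,L]$ is non-increasing, hence measurable, in $\lambda$. Inserting this representation into numerator and denominator of the weighted Rayleigh quotient and applying Tonelli's theorem (all integrands being non-negative) gives
\[
\int_0^L|\psi'|^p\,w\,dt=\int_0^\infty\Big(\int_0^{\ell(\lambda)}|\psi'|^p\,dt\Big)d\lambda,\qquad
\int_0^L|\psi|^p\,w\,dt=\int_0^\infty\Big(\int_0^{\ell(\lambda)}|\psi|^p\,dt\Big)d\lambda.
\]
For each $\lambda$ with $\ell(\lambda)>0$, the restriction $\psi|_{(0,\ell(\lambda))}$ belongs to $W^{1,p}((0,\ell(\lambda)))$ and vanishes at $0$ (because $\psi$ is identically zero near $t=0$), so the variational characterization in Lemma~\ref{lem:monot} together with $\ell(\lambda)\le L$ yields
\[
\int_0^{\ell(\lambda)}|\psi'|^p\,dt\ \ge\ \Big(\frac{\pi_p}{2}\Big)^p\frac{1}{\ell(\lambda)^p}\int_0^{\ell(\lambda)}|\psi|^p\,dt\ \ge\ \Big(\frac{\pi_p}{2}\Big)^p\frac{1}{L^p}\int_0^{\ell(\lambda)}|\psi|^p\,dt=\mu_p(1,(0,L))\int_0^{\ell(\lambda)}|\psi|^p\,dt,
\]
the case $\ell(\lambda)=0$ being trivial. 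Integrating this inequality over $\lambda\in(0,\infty)$ and using the two identities above, one gets $\int_0^L|\psi'|^p\,w\,dt\ge\mu_p(1,(0,L))\int_0^L|\psi|^p\,w\,dt$; dividing by $\int_0^L|\psi|^p\,w\,dt>0$ and taking the infimum over $\psi\in C^\infty_0((0,L])\setminus\{0\}$ gives \eqref{eq:q=p}.

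I do not anticipate a serious obstacle: the argument is essentially bookkeeping, the points deserving care being the layer-cake representation together with the measurability of $\lambda\mapsto\ell(\lambda)$, the legitimacy of the Tonelli interchange, and the elementary observation that restricting a function of $C^\infty_0((0,L])$ to $(0,\ell)$ produces an admissible competitor for $\mu_p(1,(0,\ell))$ — which is precisely why one has to invoke the $W^{1,p}$-formulation of Lemma~\ref{lem:monot} rather than the $C^\infty_0$-one. The monotonicity of $w$ enters exactly once, to guarantee that its superlevel sets are intervals of the form $(0,\ell)$ with $\ell\le L$. An alternative, slightly heavier route would be a weighted Picone-type inequality based on the monotone non-decreasing minimizer $v$ furnished by Lemma~\ref{lem:monot}: multiplying the pointwise estimate $|\psi'|^p\ge (v')^{p-1}\big(\psi^p/v^{p-1}\big)'$ by $w$, integrating by parts, and using $w'\le 0$ together with the Euler--Lagrange equation $-\big((v')^{p-1}\big)'=\mu_p(1,(0,L))\,v^{p-1}$; this, however, requires the $C^1$ regularity and the strict positivity of $v$ on $(0,L]$ as well as a Stieltjes integration by parts for the monotone weight $w$, so I would favour the layer-cake argument above.
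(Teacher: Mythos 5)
Your proof is correct, and it takes a genuinely different route from the paper. The paper first produces a positive, monotone non-decreasing minimizer $v$ of $\mu_p(1,(0,L))$ (Lemma~\ref{lem:monot}), mollifies $w$ to a smooth non-increasing $w_\varepsilon$, applies Picone's inequality together with the sign conditions $w_\varepsilon'\le 0$ and $v'\ge 0$, passes to the limit in $\varepsilon$ by dominated convergence, and finally truncates to remove the boundedness assumption on $w$. Your layer-cake argument bypasses all of this machinery: you only use the explicit scaling $\mu_p(1,(0,\ell))=\ell^{-p}(\pi_p/2)^p$ (which is monotone in $\ell$) together with the $W^{1,p}$-formulation of the unweighted problem, the observation that superlevel sets of a non-increasing weight are intervals anchored at $0$, and Tonelli. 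In particular you never need the monotone minimizer, you never invoke Picone, and unbounded weights require no separate treatment (the iterated integrals are automatically finite because $\psi$ has compact support in $(0,L]$, hence $w$ is bounded on its support). The two subtleties you flag — measurability of $\lambda\mapsto\ell(\lambda)$ (which holds because it is non-increasing) and the fact that the restriction of a $C^\infty_0((0,L])$ function is an admissible $W^{1,p}$ competitor on the subinterval — are exactly the points that need a word, and you handle both. The one micro-gap is that for $\lambda$ with $\ell(\lambda)>0$ but $\psi\equiv 0$ on $(0,\ell(\lambda))$ the Rayleigh quotient is not defined; however the displayed inequality $\int_0^{\ell(\lambda)}|\psi'|^p\ge\mu_p(1,(0,L))\int_0^{\ell(\lambda)}|\psi|^p$ is then $0\ge 0$, so nothing breaks. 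Your Picone alternative at the end is essentially the paper's argument. Overall, the layer-cake proof is shorter and cleaner than the one in the paper, at the cost of depending more directly on the exact scaling of the one-dimensional Poincaré constant rather than on a structural (variational) argument.
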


\begin{proof} 
First,  we  show that, for every  positive and monotone non-increasing weight  $w\in L^\infty((0,L))$, it holds
\begin{equation}\label{eq:q=pinter} \mu_p(1, (0,L)) \int_{0}^L |\varphi|^p w \, d t \le \int_{0}^L |\varphi'|^p \, w \, d t, \qquad \hbox{ for every }  \varphi \in C^{\infty}_0((0,L]).
\end{equation}
Indeed, let $v \in W^{1,p}((0,L))$ be a positive and  monotone non-decreasing  eigenfunction for $\mu_p(1, (0,L))$ whose existence is ensured by Lemma \ref{lem:monot}. Then $v$ is a weak solution of
\[\begin{cases} -(|v'|^{p-2} v')'=\mu_p(1, (0,L))\, v^{p-1}, \qquad \mbox{ in } (0,L),\\
 v(0)=0.\\
 \end{cases}
 \]
Moreover, fixed a mollifier $\delta \in C^\infty_0(\mathbb{R})$, given by
\[ 
\begin{split}
\delta(x):=
\begin{cases}
 \, e^{\frac{1}{|x|^2-1}}, \quad &\mbox{ if } |x|<1, \\
0, \quad &\mbox{ if } |x| \ge 1,
\end{cases}
\end{split}
\]
for $0<\varepsilon<1$, we define  
\[ \delta_\varepsilon(x) = \frac{1}{\varepsilon} \, \delta\left(\frac{x}{\varepsilon}\right) \in C^\infty(\mathbb{R}). \] 
Then $w_\varepsilon=w*\delta_\varepsilon \in C^{\infty}((\varepsilon, L-\varepsilon))$ and, as $\varepsilon \to 0$,  
$w_\varepsilon$  pointwise converges to $w$ a.\,e. on $(0,L)$.
Moreover, $w_\varepsilon' \le 0$. Indeed, for every $t, t' \in (\varepsilon, L-\varepsilon)$ such that $t>t'$, we have that
\[ 
\begin{split}
 w_\varepsilon(t)=(w * \delta_\varepsilon)(t) &= \int_0^L \delta_\varepsilon(t-y) \, w(y) \, dy =\frac{1}{\varepsilon} \int_0^L \delta\left(\frac{t-y}{\varepsilon}\right) \, w(y) \, dy \\
&= \frac{1}{\varepsilon} \int_{t-\varepsilon}^{t+\varepsilon} \delta\left(\frac{t-y}{\varepsilon}\right) \, w(y) \, dy = \int_{-1}^{1} \delta(z) \, w(t-\varepsilon z) \, dz \\
&\le \int_{-1}^{1} \delta(z) \, w(t'-\varepsilon z) \, dz = w_\varepsilon(t'),
\end{split}
\]
where in the last inequality we use that $\delta(z) > 0$, for every $z \in (-1,1)$ and $w$ is a monotone non-increasing function. 
Now, by using that $w_\varepsilon' \le 0 $ and $v'\ge 0$  a.\,e. on $(0,L)$ (thanks to Lemma \ref{lem:monot}), for every  $\varphi \in C^{\infty}_0((0,L])$, we have that 
\[
\begin{split}
\mu_p(1, (0,L)) \int_{0}^L |\varphi|^p w_\varepsilon \, d t &= \mu_p(1, (0,L)) \int_{0}^L v^{p-1} \frac{|\varphi|^p}{v^{p-1}} w_\varepsilon \, d t \\
&= \int_{0}^L |v'|^{p-2} |v'| \left( \frac{|\varphi|^p}{v^{p-1}} \,w_\varepsilon \right)' \, d t \\
&= \int_{0}^L |v'|^{p-2} v' \left( \frac{|\varphi|^p}{v^{p-1}} \right)' w_\varepsilon \, d t + \int_{0}^L |v'|^{p-2} v' \frac{|\varphi|^p}{v^{p-1}} w_\varepsilon' \, d t \\
&\le \int_{0}^L |v'|^{p-2} v' \left( \frac{|\varphi|^p}{v^{p-1}} \right)' w_\varepsilon \, d t.
\end{split}
\]
By applying Picone's inequality on the last integral (see \cite{AH}), the above inequality implies
\[ \mu_p(1, (0,L)) \int_{0}^L |\varphi|^p w_\varepsilon \, d t \le \int_{0}^L |\varphi'|^p \, w_\varepsilon \, d t. \]
Since $\|w_\varepsilon\|_{L^\infty((0,L))}\le \|w\|_{L^\infty((0,L))} $, as $\varepsilon\to0$, by using the Dominated Convergence Theorem,  we obtain that $w$ satisfies \eqref{eq:q=pinter}. 

Now, we remove the assumption that $w$ is bounded. For every $M>0$, we define $w_M:= \min\{w, M\}\in L^\infty((0,L))$. 
By applying \eqref{eq:q=pinter}, we have that 
\[\mu_p(1, (0,L)) \int_{0}^L |\varphi|^p w_M \, d t \le \int_{0}^L |\varphi'|^p \, w_M \, d t \le \int_{0}^L |\varphi'|^p \, w \, dt, \qquad \hbox{ for every }  \varphi \in C^{\infty}_0((0,L])
\]
and, sending $M\to \infty$, we get that also $w$ satisfies \eqref{eq:q=pinter}. 

Finally,   passing to the infimum on functions $\varphi \in C^{\infty}_0((0,L])$ in \eqref{eq:q=pinter}, we obtain the desired estimate
\eqref{eq:q=p}.
\end{proof}


Now, by applying the previous results, we are in a position to show Theorem \ref{thm:hersch-protter}.

\vskip.3cm

\noindent{\it Proof of Theorem \ref{thm:hersch-protter}}. We divide the proof in two parts. 

\vskip.2cm

\noindent {\it Part 1: inequality for $C^2$  convex bounded sets.}  
We first suppose that $\Omega$ is a convex bounded open set of class $C^2$. 
Let $u\in C^\infty_0(\Omega)$ then, by using formula \eqref{thm:crastamalusa}, we have that 
\begin{equation}\label{eq:CV3}
\int_{\Omega} |u(x)|^p \, dx = \int_{\partial \Omega} \left( \int_{0}^{l(x)} |u(x+ t \nu(x))|^p \, \prod_{i=1}^{N-1} (1-tk_i(x)) \, dt \right) \, d\mathcal{H}^{n-1}(x)
\end{equation} 
and
\begin{equation}\label{eq:CV4}
\int_{\Omega} |\nabla u(x)|^p \, dx = \int_{\partial \Omega} \left( \int_{0}^{l(x)} |\nabla u(x+ t \nu(x))|^p \, \prod_{i=1}^{N-1} (1-t k_i(x)) \, dt \right) \, d\mathcal{H}^{n-1}(x).
\end{equation} 

Now we fix $x \in \partial \Omega$ and let $v\in C^\infty_0((0, l(x)])$ be defined by $v(t):=u(x+t\nu(x))$ for every $t\in (0, l(x)]$. \noindent Furthermore  we introduce the weight $w_x: (0,l(x))\to \mathbb{R}$ given by 
\[ w_x(t)=\prod_{i=1}^{N-1} (1-t k_i(x))\in L^\infty(0,l(x)).\]
It is easy to verify that the weight $w_x$ is monotone non-increasing. Moreover, we note that, by \cite[Proposition 3.10, Lemma 4.1, Theorem 6.7]{CM},   $l$ is a positive and  continuous function on $\partial \Omega$. 
In particular, there exists  $z=z(x) \in \Omega$, such that $x \in \Pi(z)$ and $l(x) = d_{\Omega}(z)$. Hence 
\[  1 - t k_i(x) >1 - d_{\Omega}(z)\,k_i(x) \ge  0, \qquad \mbox{for every } t \in (0, l(x)), \]
where the last inequality follows from \cite[Lemma 5.4]{CM}.
In particular, this implies that $w_x>0$ on $(0, l(x))$. 
Being satisfied all the hypotheses of Theorem \ref{thm:quotients}, we have that
\[ \begin{split}
\int_{0}^{l(x)} |u(x+t\nu (x))|^p \,  \, w_x(t) \, dt&=  \int_{0}^{l(x)} |v(t)|^p \, w_x(t) \, dt \\ 
&\le \frac{1}{ \mu_p(w_x, (0,l(x)))} \int_{0}^{l(x)} |v'(t)|^p \, w_x(t) \, dt\\
&\le \frac{1}{ \mu_p(1, (0,l(x)))} \int_{0}^{l(x)} |v'(t)|^p \, w_x(t) \, dt.\\
\end{split} \]

\noindent Then,  applying Lemma \ref{lem:monot}  and taking into account that $l(x) \le r_\Omega$ for every $x \in \partial \Omega$, we obtain that 
\[ \begin{split}
\int_{0}^{l(x)} |u(x+t\nu (x))|^p \,  \, w_x(t) \, dt& \le \left(\frac{2}{\pi_p} \right)^p l(x)^{\,p} \int_{0}^{l(x)} |v'(t)|^p \, w_x(t) \, dt \\
&\le \left( \frac{2}{\pi_{p}} \right)^p r_{\Omega}^{\,p} \, \int_{0}^{l(x)} |\nabla u(x+t\nu(x))|^p \, w_x(t) \, d t, \qquad \hbox{ for every } x\in \partial \Omega.
\end{split} \]
By exploiting the above estimate in \eqref{eq:CV3} and then using  \eqref{eq:CV4}, we get
\[ 
\int_{\Omega} |u|^p \, d x \le \left( \frac{2}{\pi_{p}} \right)^p {r_{\Omega}^{\,p}} \left( \int_{\partial \Omega} \left( \int_{0}^{l(x)} |\nabla u(x+t\nu(x))|^p \, w_x(t) \, d t \right)\, d \mathcal{H}^{N-1}(x) \right) = \left( \frac{2}{\pi_{p}} \right)^p {r_{\Omega}^{\,p}}   \int_{\Omega} |\nabla u|^p \, d x,
\]
that is
\begin{align*}
\frac{\displaystyle\int_{\Omega} |\nabla u|^p \, dx }{\displaystyle\int_{\Omega} |u|^p \, dx} \ge \left( \frac{\pi_{p}}{2} \right)^p \frac{1}{r_{\Omega}^{\,p}}, \qquad \hbox{ for every } u \in C^\infty_0(\Omega).
\end{align*}
Taking the infimum on $C^\infty_0(\Omega)$, we obtain that  \eqref{eq:hpinequality} holds when  $\Omega$ is a convex bounded open set of class $C^2$. 

\vskip.2cm
\noindent {\it Part 2:  inequality for  convex bounded sets.}   
 We will apply an approximation argument to show the validity of \eqref{eq:hpinequality} for every convex bounded open set. Let $\Omega \subset \mathbb{R}^N$ be a convex bounded open set, then, thanks to \cite[Section 4.3]{Eggl}, there exists a sequence $\{C_k\}_{k \in \mathbb{N}} \subset \mathbb{R}^N$ of convex bounded closed sets of class $C^2$
such that 
\begin{itemize}
\item $C_{{k+1}} \subseteq C_k\subseteq C_1$, for every $k\in\mathbb{N}$;
\vskip.1cm
\item $\overline{\Omega} \subseteq C_k$ and 
\[ d_{\mathcal H}(C_k, \overline{\Omega}) = \min \left\{ \lambda \ge 0 : \overline{\Omega} \subseteq C_k + \lambda B_1,C_k \subseteq \overline{\Omega} + \lambda B_1\right\}\le \frac{1}{k}, \]
i.\,e. $\{C_k\}_{k \in \mathbb{N}}$ converges to $\overline{\Omega}$, as $k \to \infty$, in the sense of Hausdorff.
\end{itemize}

This implies that, for every $\varepsilon>0$, there exists $k_1=k_1(\varepsilon) \in \mathbb{N}$, such that 
\[ (1-\varepsilon) \,\overline{\Omega} \subseteq C_k, \qquad \mbox{ for every } k \ge k_1, \]
which leads to
\[ (1-\varepsilon) \,\Omega \subseteq \mathrm{int}(C_k), \qquad \mbox{ for every } k \ge k_1. \]
Let $r_k$ be the inradius of $\Omega_k:= \mathrm{int}(C_k)$, for every $k \in \mathbb{N}$. Then, thanks to the  monotonicity of $\lambda_p$ with respect to the inclusion of sets and by using Part (1) on $\Omega_k$, we obtain that
\begin{equation}\label{eq:approx} 
\frac{\lambda_p(\Omega)}{(1-\varepsilon)^p} \ge \lambda_p(\Omega_k) \ge \left( \frac{\pi_p}{2} \right)^p \frac{1}{r_k^{\,p}}. 
\end{equation}
Since 
\[ \Omega\subseteq \Omega_k\subseteq C_1, \qquad \mbox{ for every }  k\in\mathbb{N},\] 
we can consider the co-Hausdorff distance between $\Omega$ and $\Omega_k$, given by 
\[ d^{\mathcal H}(\Omega_k, \Omega)= d_{\mathcal H}(C_1\setminus \Omega_k, C_1\setminus \Omega),\]
and, being $\Omega$ and $\Omega_k$ convex open sets, we also have that 
\[ d^{\mathcal H}(\Omega_k, \Omega)=d_{\mathcal H}(\partial \Omega_k, \partial \Omega)=d_{\mathcal H}(C_k, \overline{\Omega})\le \frac{1}{k}.\]
Hence, as $k\to \infty$, the sequence $\{ \Omega_k\}_{k \in \mathbb{N}}$ converges to $\Omega$  in the sense of co-Hausdorff. By using \cite[Lemma 4.4]{BBP}, we have that 
\[  r_k \to r_{\Omega}, \qquad \mbox{ as } k \to \infty.\]
Hence, by sending $k \to \infty$ and then $\varepsilon \to 0$ in \eqref{eq:approx}, we finally get \eqref{eq:hpinequality}.

\vskip.2cm
 
Finally, we notice that the inequality is sharp. Indeed, the equality can be attained by different class of sets, for example by infinite slabs, as $\mathbb{R}^{N-1} \times (0,1)$, or asymptotically by the family of {\it collapsing pyramids} $C_\alpha=\mathrm{convex\,hull}\left((-1,1)^{N-1} \cup \{(0, \dots, 0, \alpha)\}\right)$, as proved in \cite[Theorem 1.2]{Bra2}. The proof is over. \qed


\section{Makai's costants for non-convex sets}\label{sec:exconstants}
%
%
 
		As pointed out in the introduction, defined $\widetilde{C}_{p,q}$ and $\widehat{C}_{2,q} $ as in \eqref{eq:makaiaperti} and \eqref{eq:makaiconn},  the natural questions that arise are whether
		\vskip.1cm
\begin{itemize}
\item $\widetilde{C}_{p,q}<{C}_{p,q}$, when $p>N$ and $1\le q<N$;
\vskip.1cm
\item $\widetilde{C}_{p,p}<{C}_{p,p}$, when $p>N$;
\vskip.1cm
\item $\widehat{C}_{2,q}<{C}_{2,q}$, when $N=2$ and $1\le q < 2$.
\end{itemize}

In this section, we give some partial answers to the questions above.


\subsection{The case  of general open sets for $1\le q<N<p$.}\label{sec:exopen}
 We  focus on the class of general open sets of $\mathbb{R}^N$ with the aim to show that,   for every  fixed $1\le q<N$,  there exists  $\overline{p}= \overline{p}(q)>N$ such that
 \begin{equation}\label{eq:confrontoCpq} 
\widetilde{C}_{p,q}<{C}_{p,q}, \qquad \mbox{ for every } p\in (q,\overline{p}].
\end{equation}

We consider the \textit{infinite fragile tower} set $\mathcal{T}\subset \mathbb{R}^N$ defined as in \cite[Theorem 5.1, (iii)]{BPZ2}. By contruction, it  satisfies the following properties:

\vskip.2cm

\begin{itemize}
\item $d_{\mathcal{T}} \in L^1(\mathcal{T}) \cap L^\infty(\mathcal{T})$; 

\vskip.2cm

\item $\lambda_{p,q}(\mathcal{T}) = 0$, for every $1 \le q<p\le N$,

\end{itemize} 
Moreover, thanks to \eqref{eq:makaiaperti}, $\lambda_{p,q}(\mathcal{T}) > 0$, for every $p>N$ and $1\le q\le p$. In order to show \eqref{eq:confrontoCpq}, it is sufficient to prove that
\[ \lim_{p \searrow N} \lambda_{p,q}(\mathcal{T}) \left( \int_{\mathcal{T}} d_{\mathcal{T}}^{\frac{p\,q}{p-q}} \, dx\right)^{\frac{p-q}{q}} < \lim_{p \searrow N} C_{p,q}.\]
With this aim, we observe that the following convergences hold
\begin{equation}\label{lim1}
\lim_{p \searrow N} \left( \int_{\mathcal{T}} d_{\mathcal{T}}^{\frac{p\,q}{p-q}} \, dx\right)^{\frac{p-q}{q}} = \left( \int_{\mathcal{T}} d_{\mathcal{T}}^{\frac{N\,q}{N-q}} \, dx\right)^{\frac{N-q}{q}} 
\end{equation}
and 
\begin{equation}\label{lim2}
 \lim_{p\searrow  N} C_{p,q} = \lim_{p \searrow N} \left( \frac{\pi_{p,q}}{2} \right)^p \left( \frac{p-q}{pq+p-q} \right)^{\frac{p-q}{q}} = C_{N,q}.
\end{equation}
The last limit follows by taking into account  that, as  computed in \cite[equation (7)]{Ta},  
\[
\pi_{p,q}=\frac{2}{q}\,\left(1+\dfrac{q}{p'}\right)^\frac{1}{q}\,\left(1+\dfrac{p'}{q}\right)^{-\frac{1}{p}}\,B\left(\frac{1}{q},\frac{1}{p'}\right),
\]
where $p'=p/(p-1)$ and $B$ is the {\it Euler Beta function}, which is continuous on $(0,+\infty)$. If we show that
\begin{equation}\label{eq:limsup}
	\limsup_{p \searrow N} \lambda_{p,q}(\mathcal{T}) \le \lambda_{N,q}(\mathcal{T}),
\end{equation}
by using \eqref{lim1}, \eqref{lim2} and \eqref{eq:limsup}, we obtain that
\[ \lim_{p \searrow N} \lambda_{p,q}(\mathcal{T}) \left( \int_{\mathcal{T}} d_{\mathcal{T}}^{\frac{p\,q}{p-q}} \, dx\right)^{\frac{p-q}{q}} \le \lambda_{N,q}(\mathcal{T}) \left( \int_{\mathcal{T}} d_{\mathcal{T}}^{\frac{N\,q}{N-q}} \, dx\right)^{\frac{N-q}{q}} =0 < C_{N,q} = \lim_{p \searrow N} C_{p,q},\]
which gives the desired conclusion.

\noindent In order to prove the claim \eqref{eq:limsup}, we note that, for every $1\le r< \infty$ and for every open set $\Omega\subseteq \mathbb{R}^N$,  it holds
\begin{equation}\label{convnorma}
	\lim_{p \searrow r} \|\nabla \varphi\|_{L^p(\Omega)}^p = \|\nabla \varphi\|_{L^r(\Omega)}^r, \qquad \mbox{ for every } \varphi \in C^{\infty}_0(\Omega).
\end{equation}
Indeed, if $\varphi \in C^\infty_0(\Omega)$, then, for every $p> r$, it follows that
\[ \int_{\Omega} |\nabla \varphi|^p \, dx = \int_{\Omega} |\nabla \varphi|^{p-r} \, |\nabla \varphi|^r \, dx \le \|\nabla \varphi\|_{L^\infty(\Omega)} ^{p-r} \int_{\Omega} |\nabla \varphi|^r \, dx,\] 
which implies 
\[\limsup_{p \searrow r}\|\nabla \varphi\|_{L^p(\Omega)}^p \le \|\nabla \varphi\|_{L^r(\Omega)}^r.\]
 On the other hand, by Fatou's Lemma, we also have that
\[ \int_{\Omega} |\nabla \varphi|^r \, dx \le \liminf_{p \searrow r} \int_{\Omega} |\nabla \varphi|^p \, dx. \]
By applying \eqref{convnorma} with $r=N$, we obtain 
\[ \limsup_{p \searrow N} \lambda_{p,q}(\mathcal{T}) \le \limsup_{p \searrow N} \frac{\displaystyle\int_{\mathcal{T}} |\nabla \varphi|^p \, dx}{ \left(\displaystyle\int_{\mathcal{T}} |\varphi|^{q} \, dx\right)^{p/{q}}}=\frac{\displaystyle\int_{\mathcal{T}} |\nabla \varphi|^N \, dx}{\left(\displaystyle\int_{\mathcal{T}} |\varphi|^{q} \, dx\right)^{N/{q}}}, \qquad \mbox{ for every } \varphi \in C^{\infty}_0(\mathcal{T}), \]
and by taking the infimum on $C^\infty_0(\mathcal{T})$, this easily implies \eqref{eq:limsup}.

\vskip.2cm

\begin{rem}
Let $1 \le q < \infty$. 
We notice that 
	\[ 
	\lim_{p \to \infty} \left( \widetilde{C}_{p,q} \right)^{\frac{1}{p}}=\lim_{p \to \infty} \left(C_{p,q}\right)^{\frac{1}{p}}=1.
	\]
Indeed, by \cite[Corollary 6.1]{BPZ2}, it holds that
\[ \lim_{p \to \infty} \frac{\pi_{p,q}}{2} = \frac{1}{2} \frac{1}{\left(\displaystyle\int_{0}^{1} (\min\{t,1-t\})^{q} \, dx\right)^{\frac{1}{q}}}=\frac{1}{(q+1)^{\frac{1}{q}}}, 
\]
which implies
\[ \lim_{p\to \infty} \left(C_{p,q}\right)^{\frac{1}{p}} = \lim_{p \to \infty} \left( \frac{\pi_{p,q}}{2} \right)^p \left( \frac{p-q}{pq+p-q} \right)^{\frac{p-q}{q}}=1. \]
Then, taking into account also \eqref{eq:makaiaperti},  it easily follows that
\[
1 \le \liminf_{p \to \infty} \left(\widetilde{C}_{p,q}\right)^{\frac{1}{p}} \le \limsup_{p \to \infty} \left(\widetilde{C}_{p,q}\right)^{\frac{1}{p}} \le \lim_{p\to \infty} \left(C_{p,q}\right)^{\frac{1}{p}} = 1. 
\]
\end{rem}

\vskip.2cm


\subsection{The case of  simply connected open sets for $p=N=2$}\label{sec:exsc}
%

\noindent In this subsection, we restrict ourselves to the case  $N=2$ and  we prove that there exist $1 \le \overline{q}<2$ and $\overline{p}> 2$ such that 
\begin{equation}\label{eq:confrontoC2q}
\widehat{C}_{2,q}< C_{2,q}, \qquad \mbox{for every } q \in [\overline{q},2],
\end{equation}
and 
\begin{equation}\label{eq:confrontoCpp}
\widetilde{C}_{p,p}< C_{p,p},\qquad  \mbox{for every } p \in [2,\overline p].
\end{equation}

First of all, we give an explicit example of a simply connected open set $\widetilde{\Omega} \subset \mathbb{R}^2$ such that 
\begin{equation}\label{eq:stimainrlambda}
	\lambda_2(\widetilde{\Omega})\,r_{\widetilde{\Omega}}^2 < C_{2,2}= \frac{\pi^2}{4}.
\end{equation}

Let $A \subset \mathbb{R}^2$ be an annulus from which we remove a segment, that is
\[ A=\left\{(x_1,x_2) \in \mathbb{R}^2 : 1 < \sqrt{x_1^2+x_2^2} < 2 \right\} \setminus \left\{(x_1,0) \in \mathbb{R}^2 : 1<x_1<2  \right\}. \]
Then it holds that $\lambda_2(A)=\pi^2$ (see \cite{Oss}, page 551).
Now, for a fixed $0<\varepsilon<1$, we consider the simply connected open set
\[ \widetilde{\Omega}= A \cup \left\{(x_1,x_2) \in \mathbb{R}^2  :  \sqrt{4-x_2^2} \le x_1<2+\varepsilon, -\varepsilon <x_2<\varepsilon \right\} \setminus \left\{(x_1,0) \in \mathbb{R}^2 : 1<x_1 <2+\epsilon \right\}. \]
\begin{figure}
	\includegraphics[scale=.4]{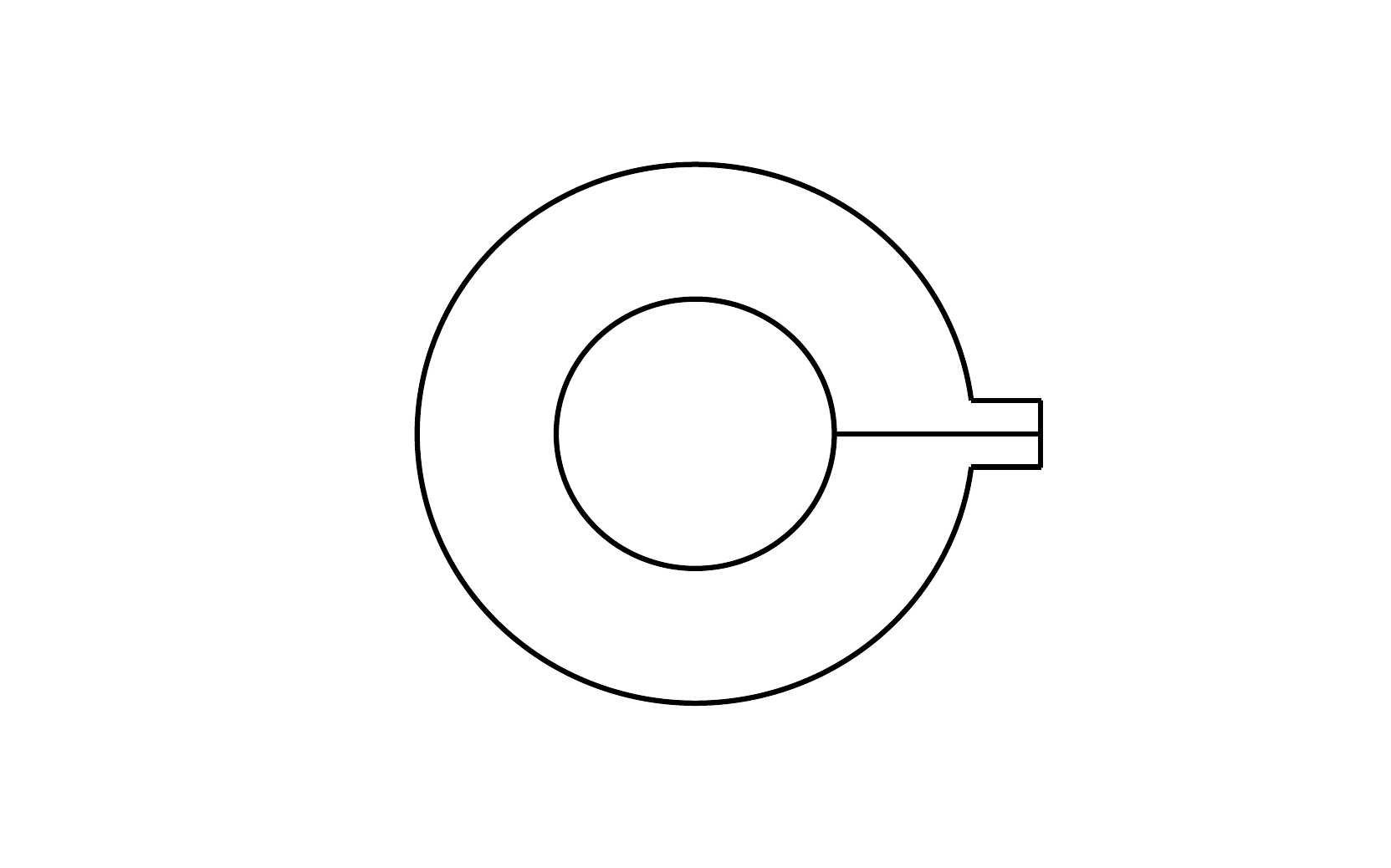}
	\caption{The set $\widetilde{\Omega}$ obtained from an annulus adding a {\it small tooth} and  removing a segment.} 
\end{figure}
Since $A \subset \widetilde{\Omega}$ and $|\widetilde{\Omega}\setminus A| \ne \emptyset$, we get that
\[ \lambda_2(\widetilde{\Omega}) <  \lambda_2(A) = \pi^2. \]
Being $r_{A}=r_{\widetilde{\Omega}}=1/2$, the above inequality implies \eqref{eq:stimainrlambda}.
Now, we recall that, by \cite[Proposition 2.3]{BPZ1}, it holds
\[\lim_{q \nearrow 2} \lambda_{2,q}(\widetilde{\Omega}) = \lambda_2(\widetilde{\Omega}) \qquad \mbox{ and } \qquad \lim_{q \nearrow 2} \pi_{2,q}=\pi, \]
hence, by combining the above limits  with \eqref{eq:stimainrlambda}, we  get 
\[ 
\begin{split}
	\lim_{q \nearrow 2} \lambda_{2,q}(\widetilde{\Omega})\,\left( \int_{\widetilde{\Omega}} d_{\widetilde{\Omega}}^{\frac{2\,q}{2-q}} \, dx\right)^{\frac{2-q}{q}} =	\lambda_2(\widetilde{\Omega})\,r_{\widetilde{\Omega}}^{\,2} < \frac{\pi^2}{4} = \lim_{q \nearrow 2} C_{2,q} .
\end{split}
\]
This gives the desired conclusion \eqref{eq:confrontoC2q}.

\noindent Finally, we note that, by applying \eqref{convnorma} with $r=2$, for every $\varphi \in C^{\infty}_0(\widetilde{\Omega})$, it holds that
\[ 
\begin{split}
\limsup_{p \searrow 2} \lambda_{p}(\widetilde{\Omega}) \le \limsup_{p \searrow 2} \frac{\displaystyle\int_{\widetilde{\Omega}} |\nabla \varphi|^p \, dx}{\displaystyle\int_{\widetilde{\Omega}} |\varphi|^{p} \, dx} 
\le  \limsup_{p \searrow 2} \frac{\displaystyle\int_{\widetilde{\Omega}} |\nabla \varphi|^p \, dx}{|\widetilde{\Omega}|^{\frac{2-p}{2}} \left(\displaystyle\int_{\widetilde{\Omega}} |\varphi|^{2} \, dx\right)^{\frac{p}{2}}} = \frac{\displaystyle\int_{\widetilde{\Omega}} |\nabla \varphi|^2 \, dx}{\displaystyle\int_{\widetilde{\Omega}} |\varphi|^{2} \, dx}, 
\end{split}
\]
and, by taking the infimum on $\varphi\in C^\infty_0(\widetilde{\Omega})$, we get that 
\[\limsup_{p \searrow 2} \lambda_{p}(\widetilde{\Omega}) \le \lambda_2(\widetilde{\Omega}).\]
Hence, 
\[ \limsup_{p \searrow 2} \lambda_{p}(\widetilde{\Omega})\, r^{\,p}_{\widetilde{\Omega}} \le \lambda_2(\widetilde{\Omega})\,r_{\widetilde{\Omega}}^{\,2}< \frac{\pi^2}{4}= \lim_{p \searrow 2} C_{p,p},\]
which implies the desired conclusion \eqref{eq:confrontoCpp}.


\end{document}